\documentclass[12pt]{article}

\usepackage{amsmath,amsfonts,amssymb, amstext, amsthm, tikz, graphicx}
\usepackage{mathdots}
\usepackage{fullpage} % set margins to be reasonable
\usepackage[colorlinks]{hyperref} % add bookmarks to PDF (and some hyperlinks)

\urlstyle{same}

\newtheorem{theorem}{Theorem}[section]
\newtheorem{lemma}[theorem]{Lemma}
\newtheorem{corollary}[theorem]{Corollary}

\newtheorem{fact}[theorem]{Fact}

\theoremstyle{definition}
\newtheorem*{definition}{Definition}

\theoremstyle{remark}
\newtheorem*{remark}{Remark}
\newcommand{\field}{\mathbb{F}}
\newcommand{\rk}{\text{rk}}
\newcommand{\FS}{\text{FS}}
\newcommand{\af}{\text{af}}

\title{New upper bounds for the Erd\H{o}s-Gy\'arf\'as problem on generalized Ramsey numbers}

\author{Alex Cameron \footnote{Department of Mathematics, Vanderbilt University, Nashville, TN 37212, USA. \newline Email: \texttt{alexander.cameron@vanderbilt.edu}}
 \and Emily Heath \footnote {Department of Mathematics, University of Illinois at Urbana-Champaign, Urbana, IL 61801, USA.  Email: \texttt{eheath3@illinois.edu}}
  }\date{\today}

\begin{document}
\maketitle

\begin{abstract}
A $(p,q)$-coloring of a graph $G$ is an edge-coloring of $G$ which assigns at least $q$ colors to each $p$-clique. The problem of determining the minimum number of colors, $f(n,p,q)$, needed to give a $(p,q)$-coloring of the complete graph $K_n$ is a natural generalization of the well-known problem of identifying the diagonal Ramsey numbers $r_k(p)$. The best-known general upper bound on $f(n,p,q)$ was given by Erd\H{o}s and Gy\'arf\'as in 1997 using a probabilistic argument. Since then,  improved bounds in the cases where $p=q$ have been obtained only for $p\in\{4,5\}$, each of which was proved by giving a deterministic construction which combined a $(p,p-1)$-coloring using few colors with an algebraic coloring.

In this paper, we provide a framework for proving new upper bounds on $f(n,p,p)$ in the style of these earlier constructions. We characterize all colorings of $p$-cliques with $p-1$ colors which can appear in our modified version of the $(p,p-1)$-coloring of Conlon, Fox, Lee, and Sudakov.  This allows us to greatly reduce the amount of case-checking required in identifying $(p,p)$-colorings, which would otherwise make this problem intractable for large values of $p$. In addition, we generalize our algebraic coloring from the $p=5$ setting and use this to give improved upper bounds on $f(n,6,6)$ and $f(n,8,8)$.
\end{abstract}

\section{Introduction}

Let $p$ and $q$ be positive integers such that $1 \leq q \leq \binom{p}{2}$. We say that an edge-coloring of a graph $G$ is a \emph{$(p,q)$-coloring} if any $p$-clique of $G$ contains edges of at least $q$ distinct colors. Let $f(n,p,q)$ denote the minimum number of colors needed to give a $(p,q)$-coloring of the complete graph on $n$ vertices, $K_n$.

This function $f(n,p,q)$ is known as the Erd\H{o}s-Gy\'{a}rf\'{a}s function after the authors of the first paper~\cite{erdos1997} to systematically study $(p,q)$-colorings. The majority of their work focused on understanding the asymptotic behavior of this function  as $n \rightarrow \infty$ for fixed values of $p$ and $q$. One of their primary results was a general upper bound of \[f(n,p,q) = O\left(n^{\frac{p-2}{\binom{p}{2}-q+1}}\right)\] obtained using the Lov\'{a}sz Local Lemma, while one of the main problems they left open was the determination of $q$, given a fixed value of $p$, for which $f(n,p,q) = \Omega(n^{\epsilon})$ for some constant $\epsilon$, but $f(n,p,q-1)=n^{o(1)}$. Towards this end, they found that \[n^{\frac{1}{p-2}}-1 \leq f(n,p,p) \leq cn^{\frac{2}{p-1}},\] where the lower bound is given by a simple induction argument and the upper bound is a special case of their general upper bound. However, they did not determine whether $f(n,p,p-1)=n^{o(1)}$.

In 2015, Conlon, Fox, Lee, and Sudakov \cite{CFLS}, building on work done on small cases by Mubayi and Eichhorn \cite{eichhorn2000note,mubayi1998note}, showed that $f(n,p,p-1)=n^{o(1)}$ by constructing an explicit $(p,p-1)$-coloring using very few colors. In \cite{55}, we slightly modified their coloring, which we call the \emph{CFLS coloring}, and paired it with an ``algebraic" construction to show that $f(n,5,5) \leq n^{1/3+o(1)}$. This improves on the general upper bound found by Erd\H{o}s and Gy\'{a}rf\'{a}s and comes close to matching their lower bound in terms of order of growth. Our construction built on the ideas of Mubayi in \cite{mubayi2004}, where he gave an explicit construction showing that $f(n,4,4) \leq n^{1/2+o(1)}$.

In this paper, we push these ideas further. In Section~\ref{CFLSsection}, we prove the following result.

\begin{theorem}\label{MainTheorem}
For any $p \geq 3$, there is a $(p,p-1)$-coloring of $K_n$ using $n^{o(1)}$ colors such that the only $p$-cliques that contain exactly $p-1$ distinct edge-colors are isomorphic (as edge-colored graphs) to one of the edge-colored $p$-cliques given in the definition below.
\end{theorem}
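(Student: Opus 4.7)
\medskip

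The plan is to build the coloring in the style of Conlon--Fox--Lee--Sudakov and then carry out a careful structural analysis on $p$-cliques. First, I would set up the vertex set as $V = A^t$ for a suitable alphabet $A$ of size $a$ and an integer $t$ chosen so that $a^t \geq n$. For each coordinate $i \in [t]$, fix an auxiliary edge-coloring $\chi_i$ of $K_A$ that has been designed to have good $(p,p-1)$-type behavior on smaller cliques (obtained recursively on $p$, or from the $p-1$ version of the construction). The color assigned to an edge $\{x,y\} \in \binom{V}{2}$ is the pair $(i, \chi_i(x_i,y_i))$, where $i = i(x,y)$ is the first coordinate on which $x$ and $y$ disagree. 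A direct product bound, following CFLS verbatim, gives that the total palette has size $n^{o(1)}$, so the color-count assertion is immediate; all of the work lies in the structural statement.

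Next, I would analyze an arbitrary $p$-clique $K$ on vertices $v_1, \dots, v_p \in V$ through its \emph{first-difference tree} $T(K)$: recursively split the $v_j$'s by their value in the smallest coordinate on which they are not all equal, then recurse on each class. Every edge of $K$ is naturally labeled by the depth of the internal node of $T(K)$ at which its two endpoints are separated, and this depth is exactly the first coordinate of its color. If $T(K)$ has internal nodes at depths $d_1 < \dots < d_s$, with the node at depth $d_j$ having $b_j$ children covering the clique, then an easy counting argument (splitting the $\binom{p}{2}$ edges across the $s$ levels and applying the inductive hypothesis that $\chi_{d_j}$ is already a $(p,p-1)$-type coloring on each level) shows the total number of distinct colors used is at least $\sum_j (b_j - 1) = p-1$, with equality forcing tight behavior at every level.

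The core step is then to characterize exactly when equality $p-1$ is attained. Equality forces two things simultaneously: (a) the restriction of the edges at each level $d_j$ to the relevant sub-clique must itself be a minimum-color configuration for $\chi_{d_j}$, and (b) there can be no ``color collisions across levels'' beyond those allowed by the shape of $T(K)$. Working inductively, assume the characterization is known for $p-1$; then the allowed extremal configurations at each internal node of $T(K)$ are already classified, and the remaining problem is to piece them together consistently along the tree. I would carry this out by showing that once $T(K)$ is fixed, the set of admissible edge-colored cliques is parametrized by a bounded number of choices, and by checking that each resulting configuration coincides, up to edge-colored isomorphism, with one of the templates listed in the definition that immediately follows the theorem statement.

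The hard part will be showing that no stray minimum-color configurations slip through the characterization, i.e.\ proving the converse direction that every extremal $p$-clique is isomorphic to one of the listed templates and not merely that the listed templates are extremal. This requires simultaneously controlling the combinatorics of $T(K)$ (how many internal nodes, with how many children each) and the algebraic identities imposed on the auxiliary colorings $\chi_{d_j}$ by the coincidences of colors across different edges. The key point in making this tractable, and the reason for ``slightly modifying'' the original CFLS coloring, is that the modification is chosen precisely so that within each level the auxiliary colors $\chi_i$ impose enough rigidity to forbid all but the explicit short list of templates; once this rigidity is in place, the induction on $p$ and on the depth of $T(K)$ closes uniformly.
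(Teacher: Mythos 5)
Your proposal has two genuine gaps, one in the construction itself and one in the structural analysis, and together they mean the argument does not go through as sketched.

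First, the coloring you describe --- color an edge $\{x,y\}$ by $(i,\chi_i(x_i,y_i))$ where $i$ is the first coordinate of disagreement --- is strictly weaker than the CFLS coloring and already fails at $p=4$. Take two prefixes $x',y'$ first differing in coordinate $i$ and two suffixes $x'',y''$, and form the four vertices $(x',x''),(x',y''),(y',x''),(y',y'')$. The four ``cross'' edges all receive the single color $(i,\chi_i(x'_i,y'_i))$, and the two remaining edges both receive the color determined by the first coordinate where $x''$ and $y''$ differ, so this $4$-clique carries only two colors. This also kills your counting step: the claim that the number of colors is at least $\sum_j(b_j-1)$ over the first-difference tree presumes that distinct internal nodes contribute distinct colors, but nodes at the same depth can collide exactly as above. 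The actual CFLS coloring avoids this by recording, at every scale $d$, the first-difference data $\eta_d$ inside \emph{every} block of the next scale (the functions $\xi_d$), not just at the global first difference; and even then the inequality $|\Lambda_I^{(d)}|+|\Lambda_E^{(d)}|\ge|C_I^{(d)}|+|C_E^{(d)}|\ge|S|-1$ requires the refinement machinery (Lemmas~\ref{RefinementFact1}, \ref{RefinementStructure}, Facts~\ref{RefinementFact2}, \ref{RefinementFact3}), not a direct level-by-level count.

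Second, the heart of the theorem is the exact characterization of the extremal cliques, and your sketch defers it entirely (``the hard part will be showing that no stray minimum-color configurations slip through'') while asserting that an unspecified ``modification'' supplies the needed rigidity. In the paper the obstruction is identified concretely: the induction on the string length shows that a clique with exactly $|S|-1$ colors under $c_p$ either has a leftover structure or contains a \emph{striped $K_4$} (three colors, each forming a perfect matching), the latter arising precisely in the collision case where some suffix-pair color is repeated across two prefixes. The modification is then the explicit extra coordinate $\Delta_p$, the vector of lexicographic comparisons of $r_p$-blocks, and a short separate argument (Lemma~\ref{NoStripedK4}) shows $\psi_p=c_p\times\Delta_p$ admits no striped $K_4$. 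Without naming this configuration, defining the modification, and proving it eliminates exactly that configuration, the converse direction of your characterization has no content; ``bounded case analysis along $T(K)$'' is not a substitute, since the collisions that produce non-leftover extremal cliques are not visible from the tree shape alone.
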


\begin{definition}
Given an edge-coloring $f:E(K_n) \rightarrow C$, we say that a subset $S \subseteq V(K_n)$ has a \emph{leftover structure under} $f$ if either $|S|=1$ or there exists a bipartition (which we will call the \emph{initial bipartition}) of $S$ into nonempty sets $A$ and $B$ for which
\begin{itemize}
\item $A$ and $B$ each have a leftover structure under $f$;
\item $f(A) \cap f(B) = \emptyset$; and
\item there is a fixed color $\alpha \in C$ such that $f(a,b)=\alpha$ for all $a \in A$ and all $b \in B$ and $\alpha \not\in f(A)$ and $\alpha \not\in f(B)$.
\end{itemize}
\end{definition}

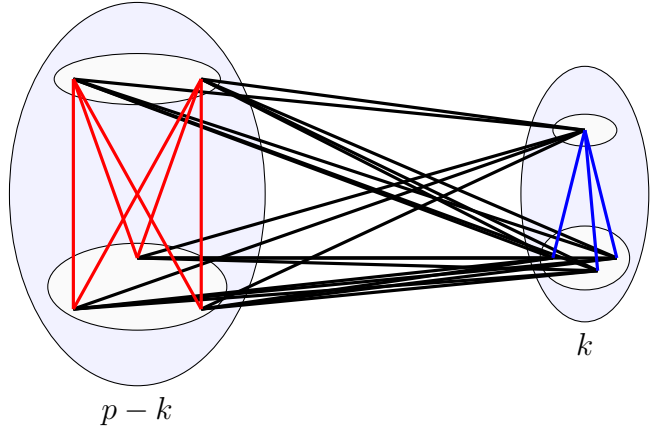
\begin{figure}
\centering
\begin{tikzpicture}[scale=.85]
		\filldraw[color=black,fill=blue!5] (0,0) ellipse (2 and 3);
		\filldraw[color=black,fill=blue!5] (7,0) ellipse (1 and 2);
		\filldraw[color=black,fill=black!2] (0,1.8) ellipse (1.3 and 0.4);
		\filldraw[color=black,fill=black!2] (0,-1.45) ellipse (1.4 and 0.68);
		\filldraw[color=black,fill=black!2] (7,1) ellipse (0.5 and 0.25);
		\filldraw[color=black,fill=black!2] (7,-1) ellipse (0.7 and 0.5);
		
		\node[below] at (0,-3) {$p-k$};
		\node[below] at (7,-2) {$k$};
		
		\draw[very thick, black] (1, 1.8) -- (7, 1);
		\draw[very thick, black] (1, 1.8) -- (6.5, -1);
		\draw[very thick, black] (1, 1.8) -- (7.5, -1);
		\draw[very thick, black] (1, 1.8) -- (7.2, -1.2);
		\draw[very thick, black] (-1, 1.8) -- (7, 1);
		\draw[very thick, black] (-1, 1.8) -- (6.5, -1);
		\draw[very thick, black] (-1, 1.8) -- (7.5, -1);
		\draw[very thick, black] (-1, 1.8) -- (7.2, -1.2);
		\draw[very thick, black] (1, -1.8) -- (7, 1);
		\draw[very thick, black] (1, -1.8) -- (6.5, -1);
		\draw[very thick, black] (1, -1.8) -- (7.5, -1);
		\draw[very thick, black] (1, -1.8) -- (7.2, -1.2);
		\draw[very thick, black] (0, -1) -- (7, 1);
		\draw[very thick, black] (0, -1) -- (6.5, -1);
		\draw[very thick, black] (0, -1) -- (7.5, -1);
		\draw[very thick, black] (0, -1) -- (7.2, -1.2);
		\draw[very thick, black] (-1, -1.8) -- (7, 1);
		\draw[very thick, black] (-1, -1.8) -- (6.5, -1);
		\draw[very thick, black] (-1, -1.8) -- (7.5, -1);
		\draw[very thick, black] (-1, -1.8) -- (7.2, -1.2);

		\draw[very thick, red] (1, 1.8) -- (1, -1.8);
		\draw[very thick, red] (1, 1.8) -- (0, -1);
		\draw[very thick, red] (1, 1.8) -- (-1, -1.8);
		\draw[very thick, red] (-1, 1.8) -- (1, -1.8);
		\draw[very thick, red] (-1, 1.8) -- (0, -1);
		\draw[very thick, red] (-1, 1.8) -- (-1, -1.8);
		
		\draw[very thick, blue] (7, 1) -- (6.5, -1);
		\draw[very thick, blue] (7, 1) -- (7.5, -1);
		\draw[very thick, blue] (7, 1) -- (7.2, -1.2);
	\end{tikzpicture}
\caption{An example of a $p$-clique with a leftover structure.}
\end{figure}

Alternatively, a more constructive definition is to say that a $p$-clique $S$ is leftover if either $p=1$ or if it can be formed from a leftover $(p-1)$-clique by taking one of its vertices $x$, making a copy $x'$, coloring $xx'$ with a new color, and coloring $x'y$ with the same color as $xy$ for each $y \in S$ for which $y \neq x$. Note that it is easy to see by induction that these $p$-cliques always contain exactly $p-1$ colors.

One of the general difficulties in producing explicit $(p,q)$-colorings is dealing with the large number of possible non-isomorphic ways to color the edges of a $p$-clique with fewer than $q$ colors in order to demonstrate that a construction avoids them.  By identifying the ``bad''  structures that are leftover after using only $n^{o(1)}$ colors, we are able to greatly reduce the amount of case-checking required in identifying $(p,p)$-colorings, which would otherwise make this problem intractable for large $p$.

More precisely, one of the nice properties of these leftover structures is that any subset of vertices of a leftover clique induces a clique that is itself leftover. Therefore, any edge-coloring of $K_n$ that eliminates leftover $p$-cliques also eliminates all leftover $P$-cliques for any $P \geq p$. Moreover, by Theorem~\ref{MainTheorem}, if this coloring uses $n^{\epsilon +o(1)}$ colors, then $f(n,P,P) \leq n^{\epsilon+o(1)}$, as the product of this coloring with the one guaranteed in Theorem~\ref{MainTheorem} will avoid any $P$-clique with fewer than $P$ colors for each $P \geq p$.

As a specific example, in \cite{55} we gave a $(5,5)$-coloring of $K_n$ that used $n^{1/3+o(1)}$ colors. Since this coloring avoids leftover 5-cliques, then it also avoids leftover $P$-cliques for all $P \geq 5$. Therefore, if we take the product of this coloring with the appropriate one developed in Section~\ref{CFLSsection} that eliminates all $6$-cliques with 5 or fewer colors other than leftover $6$-cliques, then we have a $(6,6)$-coloring that uses only $n^{1/3+o(1)}$ colors, improving the best known upper bound given above, $O(n^{2/5})$.

In Section~\ref{MDPsection}, we generalize the algebraic portion of our coloring in \cite{55}, the ``Modified Dot Product" coloring, to a version that eliminates  leftover $6$-cliques with $O(n^{1/3})$ colors (making the above example redundant) and eliminates leftover $8$-cliques with $O(n^{1/4})$ colors. By taking the product of these colorings with the appropriate ones developed in Section~\ref{CFLSsection}, this gives us the following theorem.

\begin{theorem}\label{NewBounds}
We have the following upper bounds:
\[f(n,6,6) = n^{1/3 + o(1)}; \quad f(n,8,8) = n^{1/4 + o(1)}.\]
\end{theorem}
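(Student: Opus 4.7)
Given Theorem \ref{MainTheorem} and the generalized Modified Dot Product (MDP) coloring constructed in Section \ref{MDPsection}, Theorem \ref{NewBounds} follows from a short product-coloring argument. Fix $p \in \{6, 8\}$, and let $\chi_1 : E(K_n) \to C_1$ be the $(p,p-1)$-coloring supplied by Theorem \ref{MainTheorem}, so that $|C_1| = n^{o(1)}$ and any $p$-clique receiving exactly $p-1$ distinct colors under $\chi_1$ is leftover. Let $\chi_2 : E(K_n) \to C_2$ be the generalized MDP coloring from Section \ref{MDPsection}; I take as given that $\chi_2$ assigns at least $p$ distinct colors to every leftover $p$-clique, using $|C_2| = O(n^{1/3})$ colors when $p = 6$ and $|C_2| = O(n^{1/4})$ colors when $p = 8$. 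I then consider the product coloring $\chi(e) := (\chi_1(e), \chi_2(e))$ on color set $C_1 \times C_2$.

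To verify that $\chi$ is a $(p,p)$-coloring, suppose toward contradiction that some $p$-clique $S$ receives at most $p-1$ distinct pair-colors under $\chi$. Since the number of distinct pairs on $S$ is at least $\max\bigl(|\chi_1(S)|,|\chi_2(S)|\bigr)$, both $|\chi_1(S)|$ and $|\chi_2(S)|$ are at most $p-1$. Because $\chi_1$ is itself a $(p,p-1)$-coloring, the first inequality forces $|\chi_1(S)| = p-1$, and then Theorem \ref{MainTheorem} yields that $S$ is leftover. But the defining property of $\chi_2$ now gives $|\chi_2(S)| \geq p$, a contradiction. Hence $\chi$ is a $(p,p)$-coloring, and it uses $|C_1|\cdot|C_2|$ colors: for $p=6$ this is $n^{o(1)} \cdot O(n^{1/3}) = n^{1/3+o(1)}$, and for $p=8$ this is $n^{o(1)} \cdot O(n^{1/4}) = n^{1/4+o(1)}$.

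The combining step above is essentially a two-line observation once the two ingredients are in hand; the substance of Theorem \ref{NewBounds} lies entirely in its two inputs. Theorem \ref{MainTheorem} reduces the problem to defeating only leftover $p$-cliques rather than every edge-colored $p$-clique with at most $p-1$ colors, and this reduction is made rigorous in Section \ref{CFLSsection}. The genuinely novel algebraic content is then the construction of $\chi_2$ in Section \ref{MDPsection}, where one chooses vectors and (bi)linear forms over a carefully sized finite field so that the rigidly structured coloring induced on a leftover $p$-clique (obtained by iterating the duplicate-a-vertex-and-introduce-a-new-color operation) is forced to realize at least $p$ distinct form-values. The hard part is arranging this ``anti-duplication'' property within the color budget $n^{1/3}$ (for $p=6$) or $n^{1/4}$ (for $p=8$), and this is where I expect the bulk of the technical work in proving Theorem \ref{NewBounds} to lie.
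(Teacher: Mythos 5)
Your overall route---take the product of the Theorem~\ref{MainTheorem} coloring $\chi_1$ with the dot-product coloring $\chi_2=\varphi_d$, force $|\chi_1(S)|=p-1$ for any $p$-clique $S$ with at most $p-1$ product colors, conclude that $S$ is leftover under $\chi_1$, and multiply the color counts---is the same as the paper's, and the counting at the end is fine. The genuine gap is in the property you ``take as given'' for $\chi_2$. What Section~\ref{MDPsection} actually proves (Corollaries~\ref{6clique} and~\ref{8clique}) is that no $6$-set (resp.\ $8$-set) of vectors is colored by $\varphi_3$ (resp.\ $\varphi_4$) in a leftover pattern, i.e.\ no clique is leftover \emph{under $\chi_2$ itself}. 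Your premise is different and strictly stronger: that every clique that is leftover \emph{under $\chi_1$} receives at least $p$ colors under $\chi_2$. Nothing in the paper gives this. The identifications of vertices with binary strings (for $\chi_1$) and with vectors (for $\chi_2$) are independent and arbitrary, so knowing that $S$ is $\chi_1$-leftover says nothing about the pattern of $\varphi_d$-colors on $S$; a $\chi_1$-leftover clique could a priori receive at most $p-1$ colors under $\chi_2$ arranged in some non-leftover way, which the corollaries do not forbid. So the step ``$S$ is leftover, hence $|\chi_2(S)|\geq p$'' is a non sequitur as written: being leftover under one coloring does not transfer to another coloring merely because the second uses few colors on $S$.

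The missing bridge is the refinement machinery of Section~\ref{CFLSsection}. Since $\chi=(\chi_1,\chi_2)$ refines $\chi_1$ and $|\chi(S)|=|\chi_1(S)|=p-1$, Lemma~\ref{RefinementStructure} shows that the $\chi$-color classes on $S$ coincide with the $\chi_1$-classes; hence $S$ is leftover under the product $\chi$, and each of these $p-1$ classes is monochromatic under $\chi_2$. If $|\chi_2(S)|=p-1$, the same lemma makes $S$ leftover under $\chi_2$, contradicting Corollary~\ref{6clique} or~\ref{8clique}. But $\chi_2$ might merge some of the classes, i.e.\ $|\chi_2(S)|<p-1$, and then the corollaries as literally stated do not apply; one must further observe that the arguments of Section~\ref{MDPsection} (falling stars, Lemma~\ref{MaxStar}, Lemma~\ref{WildTime}) never use the color-distinctness clauses in the definition of a leftover structure, only the recursive bipartition with monochromatic cross-edges, and that weaker pattern does survive the coarsening from $\chi$ to $\chi_2$. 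With that observation the contradiction goes through and Theorem~\ref{NewBounds} follows with the color count you give; but this transfer is precisely the content your black-box premise skips, and the premise itself is not a statement the paper establishes.
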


This improves the best-known upper bound $f(n,8,8) = O(n^{2/7})$ as well.

\section{Modified CFLS coloring}\label{CFLSsection}

In this section, we define an edge-coloring $\psi_p$ of the complete graph with vertex set $\{0,1\}^{\alpha}$ for some positive integer $\alpha$. This construction is the product of two colorings, $\psi_p = c_p \times \Delta_p$, where $c_p$ is the $(p+3,p+2)$-coloring defined in \cite{CFLS}. In many places, this section tracks parts of the proof given in \cite{CFLS}, and we have attempted to keep the notation consistent with that paper to make cross-referencing easier.

We will prove the following lemma about the coloring $c_p$.

\begin{lemma}\label{CFLSstructure}
Let $p$ be a fixed positive integer. Any subset $S \subseteq \{0,1\}^{\alpha}$ with $|S| \leq p+3$ vertices that contains exactly $|S|-1$ distinct colors under the edge-coloring $c_p$ either has a leftover structure under $c_p$ or contains a striped $K_4$ under $c_p$.
\end{lemma}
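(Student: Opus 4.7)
The approach is strong induction on $|S|$, with the base cases $|S|\le 2$ being immediate: a single vertex is leftover by definition, and an edge uses $|S|-1=1$ color and trivially has the leftover bipartition into two singletons. For the inductive step, I would exploit the recursive layered structure of the CFLS coloring $c_p$ recalled from \cite{CFLS}, where $c_p$ is built by considering, for each edge $uv$, the top coordinate on which $u$ and $v$ disagree together with a bounded window of nearby bits. This produces for any subset $S$ a canonical top-level partition $S = S_1 \sqcup \cdots \sqcup S_k$ (into classes agreeing on the chosen splitting coordinate) such that (i) edges within $S_i$ receive colors coming from a strictly smaller instance of a CFLS-type coloring, to which the inductive hypothesis applies, and (ii) the palette of within-block colors is disjoint from the palette of cross-block colors.

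Using (ii), let $b$ be the number of distinct cross-block colors and $t_i$ the number of distinct colors on $E(S_i)$, so that $b + \sum_i t_i = |S|-1$. Applying induction to each nonsingleton block $S_i$, either some $S_i$ contains a striped $K_4$ and we are done, or each $S_i$ is leftover, which forces $t_i = |S_i|-1$ and therefore $b = k-1$. The case $k=1$ is handled by passing to the inner coloring and reapplying the inductive claim. The case $k=2$ gives $b=1$, i.e.\ a single color $\alpha$ on all $S_1$--$S_2$ edges; together with (ii) and the leftoverness of each $S_i$, the bipartition $(S_1,S_2)$ exhibits $S$ itself as leftover. The case $k\ge 3$ is the interesting one: on a complete $k$-partite graph with $\binom{k}{2}\ge 3$ block-pairs but only $b=k-1 < \binom{k}{2}$ cross colors available, the pigeonhole principle forces some color to repeat on edges between two different block-pairs, and I would argue that any such repetition produces a striped $K_4$.

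I expect the main obstacle to be the final step in the $k\ge 3$ case: verifying that in the CFLS coloring a repeated cross color on two different block-pairs actually yields four vertices whose induced $K_4$ is striped (three color classes, each a perfect matching) rather than some weaker repetition pattern. This requires carefully unpacking the precise rule $c_p$ uses to assign cross-block colors in terms of the pair of prefixes and the window of bits around the splitting coordinate, and observing that the way $c_p$ records information about both endpoints precludes any ``triangle-of-one-color'' configuration. Thus the only way a single cross color can appear on two different pairs is as a matching, which when combined with a third edge supplied by the shared block yields the striped $K_4$. Once this structural fact is established, combining it with the induction and the disjointness property (ii) closes out all three cases and completes the proof.
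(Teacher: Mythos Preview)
Your plan has a genuine gap in the $k\ge 3$ case, and this is also where it diverges most from the paper's argument. A repeated cross color between two different block-pairs need \emph{not} produce a striped $K_4$. Concretely, take three vertices whose $\alpha_d$-prefixes $p_1,p_2,p_3$ already form a leftover triangle under $c_p$ on the shorter strings (so $c_p(p_1,p_2)=c_p(p_1,p_3)\neq c_p(p_2,p_3)$) and give all three a common suffix. Then $k=3$, there are exactly $k-1=2$ cross colors, and $S$ is a $3$-falling star: leftover, with no $K_4$ present at all. More generally, whenever the prefix set itself carries a leftover structure, the cross palette collapses to $k-1$ colors in a nested way that simply rebuilds a larger leftover clique; your pigeonhole-then-matching heuristic cannot separate this from the repetitions that genuinely yield a striped $K_4$.

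The paper's proof is organized differently on both axes. First, the induction is on the string length $\alpha$, not on $|S|$; this matters because the recursive step applies the hypothesis to the set $S_d$ of $\alpha_d$-prefixes, which can have the same cardinality as $S$ but lives in $\{0,1\}^{\alpha_d}$ with $\alpha_d<\alpha$. (Your $k=1$ case already tacitly needs this: ``passing to the inner coloring'' does not decrease $|S|$.) Second, the case split is not on $k$ but on whether some \emph{within}-part color repeats. If the same suffix pair $\{x'',y''\}$ appears in two different prefix classes, the four vertices $(x',x''),(x',y''),(y',x''),(y',y'')$ form a striped $K_4$ directly; this is the actual source of striped $K_4$'s. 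If all within-part colors are distinct, a short count forces every part $T_{x_d'}$ to have size at most $2$, and one then applies the inductive hypothesis to the prefix set $S_d$: a leftover structure on $S_d$ lifts to one on $S$ by splitting each size-$2$ part, and a striped $K_4$ in $S_d$ lifts verbatim. So the mechanism producing the striped $K_4$ is a repeated \emph{within}-part color across two parts, not a pigeonhole collision among cross colors, and the $k\ge 3$ situation is absorbed into the recursion on prefixes rather than resolved on the spot.
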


A striped $K_4$, as described by the following definition, was first defined in \cite{mubayi2004}.

\begin{definition}
Let $f:E(G) \rightarrow C$ be an edge-coloring of a graph $G$. We call any 4-clique of $G$, $\{a,b,c,d\} \subseteq V(G)$, for which $f(ab)=f(cd)$, $f(ac)=f(bd)$, $f(ad)=f(bc)$, $f(ab) \neq f(ac)$, $f(ab) \neq f(ad)$, and $f(ac) \neq f(ad)$ a \emph{striped $K_4$}.
\end{definition}

We will also prove the following result about the coloring $\psi_p$.

\begin{lemma}\label{NoStripedK4}
There is no striped $K_4$ under the edge-coloring $\psi_p$.
\end{lemma}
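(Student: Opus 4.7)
The plan is to exploit the product structure $\psi_p = c_p \times \Delta_p$. Recall that in a product coloring, two edges receive the same color if and only if they agree in both factors. So if a striped $K_4$ exists under $\psi_p$ on vertices $\{a,b,c,d\}$, then for each of the three perfect matchings of $K_4$, namely $\{ab,cd\}$, $\{ac,bd\}$, and $\{ad,bc\}$, both $c_p$ and $\Delta_p$ must individually assign the two paired edges the same color. Moreover, the three $\psi_p$-colors arising on the three matchings must be pairwise distinct, which in particular means the three $\Delta_p$-colors on them must be pairwise distinct as well (or else two matchings would coincide under $\psi_p$).

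The first step is to unpack the definition of $\Delta_p$, which is defined on the vertex set $\{0,1\}^{\alpha}$ and should depend only on the coordinate-wise XOR $u \oplus v$ of its endpoints. Under this assumption, the condition ``each matching is monochromatic under $\Delta_p$'' translates to the algebraic identity $a\oplus b\oplus c\oplus d = 0$ (any single one of the three matching equalities forces this, and then the other two equalities follow automatically). So the hypothesis of a striped $K_4$ already pins down the four vertices to an affine plane of $\{0,1\}^{\alpha}$, and in particular the three XOR-differences $a\oplus b$, $a\oplus c$, $a\oplus d$ are three distinct nonzero vectors summing to zero.

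The second step is to combine this algebraic constraint with what is known about the CFLS coloring $c_p$. From the analysis leading to Lemma~\ref{CFLSstructure}, the coloring $c_p$ is defined through a hierarchical block decomposition of the coordinates of $\{0,1\}^\alpha$, and the color of an edge $uv$ is determined by the coarsest block on which $u$ and $v$ differ together with auxiliary data on that block. Under the identity $a\oplus b\oplus c\oplus d = 0$, I would show that among the three matchings $\{ab,cd\},\{ac,bd\},\{ad,bc\}$, at least two must record their ``first differing block'' at the same level and with the same auxiliary data, so that two of the three matching colors under $c_p$ coincide. This forces two of the three $\psi_p$-colors on the matchings to agree, contradicting the definition of a striped $K_4$.

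The main obstacle is the second step: the $\Delta_p$ analysis should be short, but verifying that the XOR-identity $a\oplus b\oplus c\oplus d=0$ always collapses two of the three matching colors under $c_p$ requires a careful case analysis on which block of the decomposition first separates each pair. The argument will hinge on the fact that if $a$ and $b$ first differ in block $i$ and $c$ and $d$ first differ in block $j$, then the XOR relation restricts the possible values of $i$ and $j$ so tightly that either the blocks coincide and the auxiliary labels match, or else the symmetric role of another matching pair forces the same coincidence there; either way a striped pattern is precluded.
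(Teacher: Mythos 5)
There is a genuine gap, and it starts at your first step: $\Delta_p$ is \emph{not} a function of the coordinatewise XOR of its endpoints. By definition, $\Delta_p(x,y)$ is the tuple of signs $\delta_{p,i}(x,y)=\pm 1$ recording, for each $r_p$-block, whether $x_i^{(p)}\leq y_i^{(p)}$ or $x_i^{(p)}>y_i^{(p)}$ in the lexicographic order (with $x<y$); for instance, with $r_p=1$ and $\alpha=2$ one has $\Delta_p(01,10)=(+1,-1)$ but $\Delta_p(00,11)=(+1,+1)$, even though both pairs have XOR $11$. The other factor $c_p$ records only the \emph{first} index of difference at each scale together with the blocks there, so it does not determine the XOR either. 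Consequently the identity $a\oplus b\oplus c\oplus d=0$ does not follow from the matchings being monochromatic in either factor, and the affine-plane reduction that your whole argument rests on is unsupported. There is also a logical inversion in how you use the product structure: distinctness of the three $\psi_p$-colors on the matchings does not force the three $\Delta_p$-colors (nor the three $c_p$-colors) to be pairwise distinct, and conversely a coincidence of two matching colors under $c_p$ alone does not force a coincidence under $\psi_p$, so even if your second step were carried out it would not contradict the striped-$K_4$ hypothesis as claimed.

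The second step is also aimed at something that is false in substance: you try to locate the contradiction inside $c_p$, but $c_p$ by itself does admit striped $K_4$'s, precisely of the ``rectangle'' form $a=(x',x'')$, $b=(x',y'')$, $c=(y',x'')$, $d=(y',y'')$ that arises in Case 1 of the proof of Lemma~\ref{CFLSstructure} --- and these quadruples do satisfy $a\oplus b\oplus c\oplus d=0$ when the pieces align. The factor $\Delta_p$ is included in $\psi_p$ exactly to destroy such configurations. The paper's proof goes the other way around: assuming a striped $K_4$, it uses the equalities of the $\eta_p$-components of $c_p$ (taking $a$ lexicographically minimal) to pin down the block structure and the ordering $a<c<b<d$, and then observes that on the matching $\{ad,bc\}$ the orientation signs satisfy $\delta_{p,j}(a,d)=+1$ while $\delta_{p,j}(c,b)=-1$, contradicting $\psi_p(a,d)=\psi_p(b,c)$. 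So the contradiction must come from the sign information in $\Delta_p$, not from a collision within $c_p$; as written, your plan cannot be completed.
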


These two lemmas are enough to conclude that $\psi_p$ is a $(p+3,p+2)$-coloring for which any clique $S$ with $|S| \leq p+3$ that contains exactly $|S|-1$ colors must have a leftover structure.

\subsection{The construction}

For some positive integer $p$, let \[1 \leq r_1 \leq r_2 \leq \cdots \leq r_p\] be fixed positive integers such that $r_d | r_{d+1}$ for each $d=1,\ldots,p-1$. These $r_i$ will be called the parameters of our edge-coloring.

For any $\alpha \geq r_p$, let $n=2^{\alpha}$, and associate each vertex of the complete graph $K_n$ with its own unique binary string of length $\alpha$. For each $d=1,\ldots,p$, let $\alpha = a_dr_d+b_d$ for positive integers $a_d,b_d$ such that $1 \leq b_d \leq r_d$. For each string $x \in \{0,1\}^{\alpha}$, we let \[x=\left(x_1^{(d)},x_2^{(d)},\ldots,x_{a_d+1}^{(d)}\right)\] where $x_i^{(d)}$ denotes a binary string in $\{0,1\}^{r_d}$ for each $i=1,\ldots,a_d$ and $x_{a_d+1}^{(d)}$ denotes a binary string from $\{0,1\}^{b_d}$. We will call these substrings $r_d$-blocks of $x$, including the final one which may or may not actually have length equal to $r_d$.

In the following definitions, we let $r_0=1$ and $r_{p+1}=\alpha$. First, we define a function $\eta_d$ for any $d=0,\ldots,p$ on domain $\{0,1\}^{\beta} \times \{0,1\}^{\beta}$ where $\beta$ is any positive integer as \[\eta_d(x,y) = \left\{ \begin{array}{ll}
\left(i,\{x_i^{(d)},y_i^{(d)}\}\right) & \quad x \neq y\\
0 & \quad x=y
\end{array} \right. \]
where $i$ denotes the minimum index for which $x_i^{(d)} \neq y_i^{(d)}$.

For $x,y \in \{0,1\}^{\alpha}$ and $0 \leq d \leq p$, let \[\xi_d(x,y) = \left(\eta_d\left(x_{1}^{(d+1)},y_{1}^{(d+1)}\right),\ldots,\eta_d\left(x_{a_{d+1}+1}^{(d+1)},y_{a_{d+1}+1}^{(d+1)}\right)\right).\] And let \[c_p(x,y) = \left(\xi_p(x,y),\ldots,\xi_0(x,y)\right).\]

Next, we assume that the binary strings of $\{0,1\}^{\beta}$ are lexicographically ordered for every positive integer $\beta$. For $1 \leq i \leq a_p+1$ and binary strings $x<y$, define \[\delta_{p,i}(x,y) = \left\{ \begin{array}{ll}
+1 & \quad \text{if } x_i^{(p)} \leq y_i^{(p)}\\
-1 & \quad \text{if } x_i^{(p)} > y_i^{(p)}. \end{array} \right. \] Let \[\Delta_p(x,y) = \left(\delta_{p,1}(x,y),\ldots,\delta_{p,a_p+1}(x,y)\right).\]

Finally, let \[\psi_p(x,y) = \left(c_p(x,y),\Delta_p(x,y)\right).\]

\subsection{Number of colors}

For any positive integer $n$, let $\beta$ be the positive integer for which \[2^{(\beta-1)^{p+1}}<n \leq 2^{\beta^{p+1}}.\] For each $d=1,\ldots,p+1$, let $r_d=\beta^d$ in the construction of $\psi_p$. Specifically, this means we are constructing the coloring on the complete graph with vertex set $\{0,1\}^{\alpha}$ where $\alpha=\beta^{p+1}$. We can apply this coloring to $K_n$ by arbitrarily associating each vertex of $K_n$ with a unique binary string from $\{0,1\}^{\alpha}$ and taking the induced coloring.

As shown in \cite{CFLS}, for these choices of parameters $r_d$, the coloring $c_p$ uses at most $2^{4(p+1)\beta^p\log_2 \beta}$ colors. On the other hand, $\Delta_p$ uses \[2^{a_p+1} \leq 2^{\beta}\] colors. So all together, $\psi_p$ uses at most $2^{4(p+1)\beta^p\log_2 \beta+\beta}$ colors, where \[(\log_2n)^{1/(p+1)} \leq \beta < (\log_2n)^{1/(p+1)} +1.\] Thus, for any fixed $p$, $\psi_p$ uses a total of $n^{o(1)}$ colors.

\subsection{Refinement of functions}

Before we prove Lemma~\ref{CFLSstructure}, it will be helpful to give the following definition and results about refinement of functions. The definition and Lemma~\ref{RefinementFact1} are paraphrased from \cite{CFLS}.

\begin{definition}
Let $f:A \rightarrow B$ and $g:A \rightarrow C$. We say that \emph{$f$ refines $g$} if $f(a_1)=f(a_2)$ implies that $g(a_1)=g(a_2)$ for all $a_1,a_2 \in A$.
\end{definition}

\begin{lemma}[Lemma 4.1(vi) from \cite{CFLS}]
\label{RefinementFact1}
Let $f,g$ be functions on domain $A$. If $f$ refines $g$, then for all $A' \subseteq A$, we have $|f(A')| \geq |g(A')|$.
\end{lemma}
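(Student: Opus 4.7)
The plan is to construct an explicit surjection $\phi : f(A') \twoheadrightarrow g(A')$. Since a surjection between (finite) sets immediately yields $|f(A')| \geq |g(A')|$, this will suffice.

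First I would define $\phi$ as follows. For each $b \in f(A')$, fix any witness $a_b \in A'$ with $f(a_b) = b$ (such a witness exists by definition of $f(A')$), and set $\phi(b) := g(a_b)$. The one non-trivial thing to verify is that $\phi$ is well-defined, i.e.\ independent of the choice of witness, and this is exactly where the refinement hypothesis is used: if $a, a' \in A'$ both satisfy $f(a) = f(a')$, then by the definition of ``$f$ refines $g$'' we must have $g(a) = g(a')$, so the value $g(a_b)$ is intrinsic to $b$. Surjectivity of $\phi$ onto $g(A')$ is then automatic, since any $c \in g(A')$ can be written as $c = g(a)$ for some $a \in A'$, and then $c = \phi(f(a))$.

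From the surjection $\phi : f(A') \twoheadrightarrow g(A')$ we conclude $|f(A')| \geq |g(A')|$, completing the argument. I do not anticipate any genuine obstacle: the refinement property is invoked exactly once (in the well-definedness check), and no case analysis or combinatorial input is required beyond unpacking the definitions.
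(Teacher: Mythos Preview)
Your argument is correct: the surjection $\phi:f(A')\to g(A')$ is well-defined precisely because $f$ refines $g$, and surjectivity gives the cardinality inequality. The paper does not supply its own proof of this lemma---it simply cites it as Lemma~4.1(vi) of \cite{CFLS}---so there is nothing to compare against, but your write-up is a clean and standard justification.
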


\begin{lemma}\label{RefinementStructure}
Let $f,g$ be functions on domain $A$. If $f$ refines $g$ and $S \subseteq A$ is a finite subset for which $|f(S)|=|g(S)|$, then \[f(s_1)=f(s_2) \iff g(s_1)=g(s_2)\] for all $s_1,s_2 \in S$.
\end{lemma}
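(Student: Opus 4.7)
The plan is to translate the refinement hypothesis into a statement about the partitions of $S$ induced by the fibers of $f$ and $g$. For each function $h \in \{f,g\}$ on domain $S$, let $\mathcal{P}_h$ denote the partition of $S$ whose blocks are the nonempty fibers $h^{-1}(b) \cap S$ for $b \in h(S)$; note $|\mathcal{P}_h| = |h(S)|$. The assumption that $f$ refines $g$ says precisely that whenever $s_1, s_2$ lie in the same block of $\mathcal{P}_f$, they lie in the same block of $\mathcal{P}_g$. Equivalently, each block of $\mathcal{P}_g$ is a disjoint union of one or more blocks of $\mathcal{P}_f$, so $\mathcal{P}_f$ is a refinement of $\mathcal{P}_g$ as partitions of $S$.

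From this, the forward implication $f(s_1)=f(s_2) \Rightarrow g(s_1)=g(s_2)$ is immediate from the definition of refinement. For the reverse implication, I would argue by counting blocks. Since each block of $\mathcal{P}_g$ is a nonempty disjoint union of blocks of $\mathcal{P}_f$, summing over $\mathcal{P}_g$ gives
\[
|f(S)| \;=\; |\mathcal{P}_f| \;=\; \sum_{B \in \mathcal{P}_g} \#\{A \in \mathcal{P}_f : A \subseteq B\} \;\geq\; \sum_{B \in \mathcal{P}_g} 1 \;=\; |\mathcal{P}_g| \;=\; |g(S)|,
\]
with equality if and only if every block $B$ of $\mathcal{P}_g$ is equal to exactly one block of $\mathcal{P}_f$. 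The hypothesis $|f(S)|=|g(S)|$ forces equality throughout, so $\mathcal{P}_f = \mathcal{P}_g$ as partitions of $S$. Hence if $g(s_1)=g(s_2)$, then $s_1$ and $s_2$ belong to a common block of $\mathcal{P}_g = \mathcal{P}_f$, giving $f(s_1)=f(s_2)$.

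There is no real obstacle in this argument; it is essentially a bookkeeping step verifying that a refinement of partitions with the same number of blocks must coincide. The only care required is to restrict the fibers to $S$ (rather than working with $f^{-1}(b)$ over all of $A$), so that the cardinalities $|f(S)|$ and $|g(S)|$ correctly count the number of nonempty blocks of the relevant partitions of $S$.
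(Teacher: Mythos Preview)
Your proof is correct and follows essentially the same approach as the paper: the forward implication is immediate from the definition of refinement, and the reverse implication comes from the cardinality constraint $|f(S)|=|g(S)|$. The paper simply compresses your partition-counting argument into a one-line contradiction (if $g(s_1)=g(s_2)$ but $f(s_1)\neq f(s_2)$ then $|f(S)|\geq |g(S)|+1$), whereas you spell out the underlying partition structure explicitly.
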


\begin{proof}
The forward direction follows from the definition of $f$ refining $g$. Conversely, if we have $g(s_1)=g(s_2)$ but $f(s_1) \neq f(s_2)$ for some $s_1,s_2 \in S$, then $|f(S)| \geq |g(S)|+1$, a contradiction. 
\end{proof}

In particular, Lemma~\ref{RefinementStructure} implies that if some edge-coloring of a clique $S$ is refined by another edge-coloring, but $S$ contains the same number of colors under each, then the edge-colorings must be isomorphic.

\subsection{Proof of Lemma~\ref{CFLSstructure}}

Let $S \subseteq \{0,1\}^{\alpha}$ be a set of $|S| \leq p+3$ vertices which contains exactly $|S|-1$ distinct edge colors under $c_p$. We will prove that $S$ either has a leftover structure or contains a striped $K_4$ by induction on $\alpha$, similar to the proof of Theorem 2.2 from \cite{CFLS}.

For the base case, consider $\alpha \leq r_p$. Then for any $x,y \in S$, the first component of $c_p(x,y)$ is \[\xi_p(x,y)=\left(\eta_p(x,y)\right)=\left((1,\{x,y\})\right).\] Therefore, all of the edges of $S$ receive distinct colors. So it must be that $|S|-1=\binom{|S|}{2}$, which happens only when $|S|=1,2$. In either case, $S$ trivially has a leftover structure.

Now assume that $\alpha > r_p$ and that the statement is true for shorter binary strings. For each $d=1,\ldots,p$, let $\alpha_d$ be the largest integer strictly less than $\alpha$ that is divisible by $r_d$. For any $x \in S$, let $x=(x'_d,x''_d)$ for $x'_d \in \{0,1\}^{\alpha_d}$ and $x''_d \in \{0,1\}^{\alpha-\alpha_d}$.

Let $S_d$ denote the set of $\alpha_d$-prefixes of $S$, \[S_d=\left\{x_d' \in \{0,1\}^{\alpha_d} | \exists x \in S, x=(x_d',x_d'') \right\}.\] For each $x_d' \in S_d$, let \[T_{x_d'} = \{x \in S | x=(x_d',x_d'')\}.\] Let $\Lambda_I^{(d)}$ be the set of colors contained in $S$ found on edges that go between vertices from two distinct $T$-sets, \[\Lambda_I^{(d)}=\{c_p(x,y) | x,y \in S; x_d' \neq y_d'\}.\] Similarly, let $\Lambda_E^{(d)}$ denote the set of colors contained in $S$ found on edges between vertices from the same $T$-set, \[\Lambda_E^{(d)} = \{c_p(x,y)| x,y \in S; x \neq y; x_d' = y_d'\}.\] Note that these sets of colors, $\Lambda_I^{(d)}$ and $\Lambda_E^{(d)}$, partition all of the colors contained in $S$. Therefore, \[|S|-1=|\Lambda_I^{(d)}|+|\Lambda_E^{(d)}|.\]

Next, define
\begin{align*}
    C_I^{(d)} &= \left\{(c_p(x_d',y_d'),\eta_{d-1}(x_d'',y_d''))|x,y \in S; x_d' \neq y_d'\right\}\\
    C_E^{(d)} &= \left\{\{x_d'',y_d''\}|x,y \in S; x \neq y; x_d' = y_d'\right\}.\\
\end{align*}
It is shown in \cite{CFLS} that $|\Lambda_I^{(d)}| \geq |C_I^{(d)}|$ and that $|\Lambda_E^{(d)}| \geq |C_E^{(d)}|$. The second inequality is easier to see since any distinct $x,y \in S$ for which $x_d' = y_d'$ give $\xi_d=\left(0,\ldots,0,(i,\{x_d'',y_d''\})\right)$ as the appropriate component of $c_p(x,y)$. Although the first inequality seems intuitively true, its proof is a bit more subtle. The following Fact~\ref{RefinementFact2} (proved in \cite{CFLS}) together with Lemma~\ref{RefinementFact1} give us the desired inequality. 

\begin{fact}[Lemma 4.3 from \cite{CFLS}]
\label{RefinementFact2}
For $x,y \in \{0,1\}^{\alpha}$, let \[\gamma_d(x,y)=(c_p(x_d',y_d'),\eta_{d-1}(x_d'',y_d'')).\] Then $c_p$ refines $\gamma_d$ as functions on domain $\{0,1\}^{\alpha} \times \{0,1\}^{\alpha}$.
\end{fact}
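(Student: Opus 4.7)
The plan is to construct, directly, a map $F$ with $F \circ c_p = \gamma_d$---i.e., show that both coordinates of $\gamma_d(x,y)$ can be read off from the tuple $c_p(x,y) = (\xi_p(x,y),\ldots,\xi_0(x,y))$. Since refining is equivalent to the existence of such an $F$, this suffices.

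The second coordinate $\eta_{d-1}(x_d'',y_d'')$ is essentially free. From the defining relation $\alpha = a_d r_d + b_d$ with $1 \leq b_d \leq r_d$, one gets $\alpha_d = a_d r_d$, so the suffix $x_d''$ of length $b_d$ coincides exactly with the last $r_d$-block $x_{a_d+1}^{(d)}$ appearing in the $\eta_{d-1}$-decomposition used by $\xi_{d-1}$. Hence $\eta_{d-1}(x_d'',y_d'')$ is literally the last coordinate of $\xi_{d-1}(x,y)$.

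For the first coordinate I would recover each $\xi_e(x_d',y_d')$ in turn. The key structural point is that the $r_{e+1}$-block decomposition of $x_d'$ agrees with that of $x$ on every block entirely inside $x_d'$; at most one block of $x$, namely the one containing position $\alpha_d$, can straddle the $x_d'/x_d''$ boundary. For $e \leq d-1$ the divisibility $r_{e+1}\mid r_d \mid \alpha_d$ rules out straddling, so $\xi_e(x_d',y_d')$ is simply the first $\alpha_d/r_{e+1}$ coordinates of $\xi_e(x,y)$. For $e \geq d$, writing $m = \lfloor \alpha_d/r_{e+1}\rfloor$ and $L = \alpha_d - m r_{e+1}$, the first $m$ coordinates of $\xi_e(x_d',y_d')$ are again copied from $\xi_e(x,y)$, and when $L>0$ the only remaining task is to recover $\eta_e$ on the length-$L$ prefix $B_x'$ of the straddling block $B_x$ from the stored value $\eta_e(B_x,B_y)$, which sits in the $(m+1)$-st coordinate of $\xi_e(x,y)$.

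This last step is the main obstacle, and I would dispose of it with a short case analysis on the stored value. If $\eta_e(B_x,B_y)=0$ then $B_x=B_y$, so $B_x'=B_y'$ and the output is $0$. Otherwise $\eta_e(B_x,B_y)=(i,\{u,v\})$, and comparing $ir_e$ with $L$ splits into three sub-cases: (a) if $ir_e\leq L$ the entire $i$-th sub-block sits inside the prefix and the output is again $(i,\{u,v\})$; (b) if $(i-1)r_e \geq L$ the prefix ends before any disagreement, so the output is $0$; (c) otherwise $L$ falls strictly inside the $i$-th sub-block, and I would take the length-$(L-(i-1)r_e)$ prefixes $u',v'$ of $u,v$ and return $0$ or $(i,\{u',v'\})$ according to whether $u'=v'$. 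Each sub-case uses only $L$, a known function of the parameters, together with the stored pair $(i,\{u,v\})$, so $\xi_e(x_d',y_d')$ is determined by $c_p(x,y)$; assembling these for every $e$ yields $c_p(x_d',y_d')$ and finishes the refinement.
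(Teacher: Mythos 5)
Your argument is correct. Note that the paper itself contains no proof of this statement: it is imported as Lemma 4.3 of \cite{CFLS}, so there is no internal argument to compare against, and what you have written is a sound self-contained verification in the same spirit as the cited proof. The skeleton is right: refinement follows once you exhibit a map $F$ with $\gamma_d = F\circ c_p$; the second coordinate $\eta_{d-1}(x_d'',y_d'')$ is literally the last coordinate of $\xi_{d-1}(x,y)$ because $\alpha_d=a_dr_d$, so $x_d''$ is exactly the final $r_d$-block; for $e\le d-1$ the chain $r_{e+1}\mid r_d\mid \alpha_d$ makes every block of $x_d'$ a full block of $x$, so $\xi_e(x_d',y_d')$ is an initial segment of $\xi_e(x,y)$; and for $e\ge d$ only the block containing position $\alpha_d$ needs repair. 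Your three-way comparison of $ir_e$ with $L$ is complete and each branch is forced: in (a) the condition $ir_e\le L$ automatically rules out the $i$th sub-block being the short terminal one, so the recovered value is the stored $(i,\{u,v\})$; in (b) the prefixes of $B_x,B_y$ of length $L\le (i-1)r_e$ coincide; in (c) the unordered pair $\{u',v'\}$ and the test $u'=v'$ are determined by the stored unordered pair $\{u,v\}$ together with the length $L-(i-1)r_e$, so it does not matter which of $u,v$ came from $x$. Since $\alpha_d$, the parameters $r_e$, and hence $m$ and $L$ are global constants rather than data about the pair $(x,y)$, your $F$ depends only on the color value, which is exactly what the definition of refinement requires.
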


We will also use the following Fact~\ref{RefinementFact3} which is proven in \cite{CFLS}, although not stated as a claim or lemma that can be easily cited. (See the final sentence of the second-to-final paragraph on page 11.)

\begin{fact}[proved in \cite{CFLS}]\label{RefinementFact3}
There exists an integer $1 \leq d \leq p$ for which \[|C_I^{(d)}|+|C_E^{(d)}| \geq |S|-1.\]
\end{fact}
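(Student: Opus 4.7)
My plan is to prove this by strong induction on $\alpha$, following the scheme of the proof of Theorem 2.2 in \cite{CFLS}. For the base case $\alpha \leq r_p$, the top component $\xi_p(x,y)$ already distinguishes every pair in $S$, so $|S|-1 = \binom{|S|}{2}$ forces $|S| \leq 2$, and the claim is immediate for any $d$.

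A useful preliminary observation is that $|\Lambda_E^{(d)}| = |C_E^{(d)}|$ holds for every $d$: for intra-$T$-set edges the common $\alpha_d$-prefix kills the relevant $\xi$-components, so the color is entirely determined by (and determines) the unordered suffix pair $\{x_d'',y_d''\}$. Combined with the partition identity $|\Lambda_I^{(d)}| + |\Lambda_E^{(d)}| = |S|-1$, the Fact reduces to finding $d$ with $|C_I^{(d)}| = |\Lambda_I^{(d)}|$, i.e., a level at which $\gamma_d$ separates inter-$T$-set $c_p$-colors as finely as $c_p$ itself.

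For the inductive step with $\alpha > r_p$, I would choose $d^* \in \{1,\ldots,p\}$ to be the largest index for which the partition of $S$ into $T$-sets is nontrivial (i.e., has some part of size at least two). If no such $d^*$ exists, then every pair of vertices of $S$ has distinct $\alpha_1$-prefix, so $|\Lambda_E^{(1)}|=0$ and $|\Lambda_I^{(1)}|=|S|-1$; applying Lemma \ref{RefinementStructure} to the refinement $\gamma_1$ (provided by Fact \ref{RefinementFact2}) gives $|C_I^{(1)}|=|S|-1$, and $d=1$ works. Otherwise, apply the inductive hypothesis to the prefix-set $S_{d^*} \subseteq \{0,1\}^{\alpha_{d^*}}$, which is valid because $\alpha_{d^*}<\alpha$ and $|S_{d^*}| \leq |S| \leq p+3$; this yields some $d'$ with $|C_I^{(d')}(S_{d^*})| + |C_E^{(d')}(S_{d^*})| \geq |S_{d^*}|-1$, and via Fact \ref{RefinementFact2} this lower bound transfers to $|C_I^{(d^*)}(S)| \geq |S_{d^*}|-1$. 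Summing with $|C_E^{(d^*)}| \geq |S|-|S_{d^*}|$ obtained from the intra-$T$-set contributions yields $|C_I^{(d^*)}|+|C_E^{(d^*)}| \geq (|S_{d^*}|-1)+(|S|-|S_{d^*}|)=|S|-1$, as required.

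The main obstacle is twofold. First, applying the inductive hypothesis to $S_{d^*}$ requires that $S_{d^*}$ itself have exactly $|S_{d^*}|-1$ distinct $c_p$-colors; if it has fewer, one must invoke the main theorem of \cite{CFLS} (which forbids $(p+3)$-cliques with fewer than $p+2$ colors) to rule the deficient case out. Second, and more delicate, the bound $|C_E^{(d^*)}| \geq |S|-|S_{d^*}|$ requires controlling coincidences of unordered suffix pairs across different $T$-sets, which a priori can deflate the count. Both issues must be resolved by exploiting the block-alignment guaranteed by the divisibility conditions $r_d \mid r_{d+1}$, and more specifically the fact that with $\alpha = \beta^{p+1}$ and $r_d = \beta^d$ every $\alpha_{d}$ is divisible by every $r_j$ with $j \leq d$, so prefix/suffix block boundaries align cleanly. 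These technical verifications form the heart of the argument and are precisely where the CFLS proof does its work.
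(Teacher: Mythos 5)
Note first that the paper never reproves this Fact: it is imported verbatim from \cite{CFLS} (the citation points to page 11 of that paper), so the comparison is with the CFLS counting argument, and your sketch both diverges from it and contains steps that fail. Your ``preliminary observation'' $|\Lambda_E^{(d)}| = |C_E^{(d)}|$ is false: for an intra-$T$-set pair the color determines the suffix pair $\{x_d'',y_d''\}$ (this is the direction $|\Lambda_E^{(d)}|\ge|C_E^{(d)}|$ that the paper uses), but not conversely, because for $e>d$ the $r_e$-blocks recorded by $\xi_e$ need not respect the cut at $\alpha_d$. For instance, with $p=2$, $r_1=1$, $r_2=2$, $\alpha=4$, $d=1$, the pairs $\{0000,0001\}$ and $\{0010,0011\}$ have the same suffix pair $\{0,1\}$ but different $\xi_2$ (namely $\{00,01\}$ versus $\{10,11\}$ in the recorded block), hence different colors. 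Your choice of $d^*$ is also degenerate: since $\alpha_1\ge\alpha_2\ge\cdots\ge\alpha_p$, a nontrivial part at level $d$ forces one at every higher level, so ``the largest nontrivial level'' is always $d^*=p$; no pigeonhole over the $p$ levels occurs, and the hypothesis $|S|\le p+3$ --- which is essential, since the Fact for sets of unrestricted size, combined with $|\Lambda_\bullet^{(d)}|\ge|C_\bullet^{(d)}|$, would force every clique of every size to span at least $|S|-1$ of the $n^{o(1)}$ colors --- enters your argument only through the proposed appeal to the main theorem of \cite{CFLS}, i.e.\ to the very result whose proof this Fact is a step of.

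The decisive gap is the inequality $|C_E^{(d^*)}| \ge |S|-|S_{d^*}|$, which you flag as delicate but hope to rescue via block alignment. It is false, and alignment cannot rescue it: take $a=(x_d',u)$, $b=(x_d',v)$, $c=(y_d',u)$, $e=(y_d',v)$ with $x_d'\ne y_d'$ and $u\ne v$; then $|S|-|S_d|=2$ while $C_E^{(d)}=\bigl\{\{u,v\}\bigr\}$ has a single element. This is exactly the configuration of Case 1 in the paper's proof of Lemma~\ref{CFLSstructure} (the striped-$K_4$ pattern), so it genuinely occurs inside sets with exactly $|S|-1$ colors, and it is fully consistent with the divisibility and alignment conditions, which constrain where block boundaries fall, not which suffix pairs recur in which $T$-set. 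The Fact survives there only because $C_I^{(d)}$ contains both $(c_p(x_d',y_d'),0)$ and $(c_p(x_d',y_d'),\eta_{d-1}(u,v))$: the deficit in $C_E^{(d)}$ is recovered by the $\eta_{d-1}$ coordinate of $C_I^{(d)}$, which your bound $|C_I^{(d^*)}|\ge|C_B^{(d^*)}|\ge|S_{d^*}|-1$ throws away; the CFLS argument is built around precisely this extra count together with a choice of $d$ that uses $|S|\le p+3$ against the number of levels. Finally, in your all-singletons case the appeal to Lemma~\ref{RefinementStructure} is backwards: refinement only yields $|C_I^{(1)}|\le|\Lambda_I^{(1)}|$, and Lemma~\ref{RefinementStructure} presupposes the equality of counts you are trying to establish; the legitimate source of $|C_I^{(1)}|\ge|S|-1$ there is the induction hypothesis applied to the $|S|$ distinct prefixes, not a refinement lemma.
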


Therefore, \[|S|-1=|\Lambda_I^{(d)}|+|\Lambda_E^{(d)}| \geq |C_I^{(d)}|+|C_E^{(d)}| \geq |S|-1,\] which implies that \[|S|-1=|\Lambda_I^{(d)}|+|\Lambda_E^{(d)}| = |C_I^{(d)}|+|C_E^{(d)}|.\]

Let \[\tilde{c}_p(x,y) = \left\{ \begin{array}{ll}
(c_p(x_d',y_d'),\eta_{d-1}(x_d'',y_d'')) & \quad \text{if } x_d' \neq y_d'\\
\{x_d'',y_d''\} & \quad \text{otherwise}. \end{array} \right.\] Then by Fact~\ref{RefinementFact2} we know that $\tilde{c}_p$ refines $c_p$. And since $|\Lambda_I^{(d)}|+|\Lambda_E^{(d)}| = |C_I^{(d)}|+|C_E^{(d)}|$, then by Lemma~\ref{RefinementStructure} we know that the structure of $S$ under $\tilde{c}_p$ must be the same as the structure of $S$ under $c_p$. Therefore, we need only show that $S$ either has a leftover structure or contains a striped $K_4$ under $\tilde{c}_p$ to complete the proof. We consider two cases: either there exists some $\omega \in C_E^{(d)}$ that appears more than once in $S$ under $\tilde{c}_p$ or each $\omega \in C_E^{(d)}$ appears exactly once in $S$ under $\tilde{c}_p$.

\emph{Case 1}: Let $\omega \in C_E^{(d)}$ appear on at least two edges in $S$. This implies that $\omega=\{x_d'',y_d''\}$ and so there must exist $a,b,c,e \in S$ such that $a=(x_d',x_d'')$, $b=(x_d',y_d'')$, $c=(y_d',x_d'')$, and $e=(y_d',y_d'')$ for some $x_d' \neq y_d'$. Therefore,
\begin{align*}
    \tilde{c}_p(a,b)=\tilde{c}_p(c,e)&=\{x_d'',y_d''\}\\
    \tilde{c}_p(a,c)=\tilde{c}_p(b,e)&=(c_p(x_d',y_d'),0)\\
    \tilde{c}_p(a,e)=\tilde{c}_p(b,c)&=(c_p(x_d',y_d'),\eta_{d-1}(x_d'',y_d'')),
\end{align*}
and all three colors are distinct. Hence, $S$ contains a striped $K_4$ under $\tilde{c}_p$.

\emph{Case 2}: If each $\omega \in C_E^{(d)}$ appears exactly once in $S$ under $\tilde{c}_p$, then we know that \[|C_E^{(d)}| = \sum_{x_d' \in S_d}\binom{|T_{x_d'}|}{2}\] since each edge within a given $T$-set receives a unique color. Moreover, if we let \[C_B^{(d)} = \{c_p(x_d',y_d') | x_d',y_d' \in S_d\},\] then we know that \[|C_I^{(d)}| \geq |C_B^{(d)}| \geq |S_d|-1.\] Therefore,
\begin{align*}
    |S_d|-1+\sum_{x_d' \in S_d}\binom{|T_{x_d'}|}{2} &\leq |S|-1\\
    \sum_{x_d' \in S_d}\binom{|T_{x_d'}|}{2} &\leq |S|-|S_d|\\
    \sum_{x_d' \in S_d}\binom{|T_{x_d'}|}{2} &\leq \sum_{x_d' \in S_d} (|T_{x_d'}|-1)\\
     \sum_{x_d' \in S_d}(|T_{x_d'}|-1)(|T_{x_d'}|-2) &\leq 0.
\end{align*}
Hence, $|T_{x_d'}|=1,2$ for each $x_d' \in S_d$. This implies that $|C_E^{(d)}|=\sum_{x_d' \in S_d} (|T_{x_d'}|-1)$ and $|C_I^{(d)}|=|C_B^{(d)}|=|S_d|-1$. So by induction, $S_d$ either has a leftover structure or contains a striped $K_4$ under $c_p$. Furthermore, the coloring defined by \[c'_p(x,y) = \left\{ \begin{array}{ll}
c_p(x_d',y_d') & \quad \text{if } x_d' \neq y_d'\\
\{x_d'',y_d''\} & \quad \text{otherwise} \end{array} \right.\] is refined by $\tilde{c}_p$, and $S$ contains exactly $|S|-1$ colors under both $c'_p$ and $\tilde{c}_p$. So by Lemma~\ref{RefinementStructure}, the edge-coloring of $S$ under $\tilde{c}_p$ is isomorphic to the one under $c'_p$, and hence it is sufficient to show that $S$ has either a leftover structure or contains a striped $K_4$ under $c'_p$.

If $S_d$ has a leftover structure under $c_p$, then we see that $S$ also has a leftover structure under $c'_p$ since we can form $S$ under $c'_p$ from $S_d$ under $c_p$ by a sequence of splits as described in the definition of a leftover structure. That is, for each $x_d' \in S_d$ for which $|T_{x_d'}| = 2$, we replace $x_d'$ with two vertices
with a new edge color between them, and the same edge colors that $x_d'$ already had to the rest of the vertices.

On the other hand, if $S_d$ contains a striped $K_4$ under $c_p$, then $S$ must contain a striped $K_4$ under $c_p'$ with colors entirely from $C_B^{(d)}$. This concludes the proof.

\subsection{Proof of Lemma~\ref{NoStripedK4}}

Let $a,b,c,d \in \{0,1\}^{\alpha}$ be four distinct vertices, and assume towards a contradiction that they form a striped $K_4$ under $\psi_p$. Specifically, assume that $\psi_p(a,b)=\psi_p(c,d)$, $\psi_p(a,c)=\psi_p(b,d)$, and $\psi_p(a,d)=\psi_p(b,c)$.

Without loss of generality, we may assume the following: that $a$ is the minimum element of the four under the lexicographic ordering of $\{0,1\}^{\alpha}$; that for some $i \leq j,k$,
\begin{align*}
    \eta_p(a,b) = \eta_p(c,d) &= (i,\{x,y\})\\
    \eta_p(a,c) = \eta_p(b,d) &= (j,\{z,w\})\\
    \eta_p(a,d) = \eta_p(b,c) &= (k,\{s,t\});
\end{align*}
and that $a_i^{(p)}=c_i^{(p)}=x$ while $b_i^{(p)}=d_i^{(p)}=y$. It follows from the ordering that $x < y$ and that $a<c<b,d$. Furthermore, we have $i<j$ since  $a$ and $c$ do not differ in the $i^{th}$ block. Similarly, we see that $(k,\{s,t\})=(i,\{x,y\})$. Without loss of generality, we may let $a_j^{(p)}=b_j^{(p)}=z$ and $c_j^{(p)}=d_j^{(p)}=w$. Therefore, $z<w$ and $a<c<b<d$.

Now, it follows that $\delta_j(a,d) = +1$ and that $\delta_j(c,b)=-1$, a contradiction since we assume that $\psi_p(a,d)=\psi_p(c,b)$.

\section{Modified Dot Product coloring}\label{MDPsection}

Fix an odd prime power $q$ and a positive integer $d$. In this section, we prove Theorem~\ref{NewBounds} by giving an edge-coloring $\varphi_d$ for the complete graph on $n=(q-1)^d$ vertices that uses $(3d+1)q-1$ colors and contains no leftover 6-cliques when $d=3$ and no leftover 8-cliques when $d=4$. 

In what follows, we make use of several standard concepts and results from linear algebra without providing explicit definitions or proofs. We highly recommend \emph{Linear Algebra Methods in Combinatorics} by L\'{a}szl\'{o} Babai and P\'{e}ter Frankl \cite{linear} for a detailed treatment of these ideas. In particular, Chapter 2 covers all of the necessary background for our argument.

\subsection{The construction}

Let $\field_q^*$ denote the nonzero elements of the finite field with $q$ elements, and let $(\field_q^*)^d$ denote the set of ordered $d$-tuples of elements from $\field_q^*$. In other words, $(\field_q^*)^d$ is the set of $d$-dimensional vectors over the field $\field_q$ without zero components. In what follows, we will assume that the set $\field_q^*$ is endowed with a linear order which can be arbitrarily chosen. We then order the set $(\field_q^*)^d$ with lexicographic ordering based on the order applied to $\field_q^*$.

Define a set of colors $C_d$ as the disjoint union \[C_d = \text{DOT} \sqcup \text{ZERO} \sqcup \text{UP} \sqcup \text{DOWN},\] where $\text{DOT}=\field_q^*$, and ZERO, UP, and DOWN are each copies of the set $\{1,\ldots,d\} \times \field_q$. Let \[\varphi_d:\binom{(\field_q^*)^d}{2} \rightarrow C_d\] be a coloring function of pairs of distinct vectors, $x<y$, defined by \[\varphi_d\left(x,y\right) = \left\{ \begin{array}{ll}
(i,x_i+y_i)_{\text{ZERO}} & \quad \text{if } x \cdot y=0\\
(i,x_i+y_i)_{\text{UP}} & \quad \text{if } x \cdot y \neq 0 \text{ and } x \cdot y = x \cdot x\\
(i,x_i+y_i)_{\text{DOWN}} & \quad \text{if } x \cdot y \not\in \{0,x \cdot x\} \text{ and } x \cdot y = y \cdot y\\
x \cdot y & \quad \text{otherwise}\\
\end{array} \right. \] where $i$ is the first coordinate for which $x=(x_1,\ldots,x_d)$ differs from $y=(y_1,\ldots,y_d)$ and $x \cdot y$ denotes the standard inner product (dot product).

\subsection{Number of colors}

Let $n$ be a positive integer. Let $q$ be the smallest odd prime power for which $n \leq (q-1)^d$. Then we can color the edges of $K_n$ by arbitrarily associating each vertex with a unique vector from $(\field_q^*)^d$ and taking the coloring induced by $\varphi_d$. By Bertrand's Postulate, $q \leq 2(n^{1/d}+1)$. Therefore, the number of colors used by $\varphi_d$ on $K_n$ is at most \[(3d+1)q-1 \leq (6d+2)n^{1/d}+(6d+1).\]

\subsection{Definitions and lemmas}

\begin{definition}
Given a subset of vectors $S \subseteq \field^d$, let $\text{rk}(S)$ denote the \emph{rank} of the subset, the dimension of the linear subspace spanned by the vectors of $S$. Let $\af(S)$ denote the \emph{affine dimension} of $S$, the dimension of the affine subspace (also known as the affine hull) spanned by $S$.
\end{definition}

\begin{definition}
A color $\alpha \in C_d$ has the \emph{dot property} if $\alpha\in \text{DOT} \cup \text{ZERO}$. Note that if $\alpha$ has the dot property, then $\varphi_d\left(a,b\right) = \varphi_d\left(e,f\right)=\alpha$ implies that $a \cdot b = e \cdot f$ for any $a,b,e,f\in (\field_q^*)^d$.
\end{definition}

\begin{lemma}\label{LinIndyCondition}
Let $\{s_1,\ldots,s_t\} \subseteq (\field_q^*)^d$ be a set of $t$ linearly independent vectors and let $a,b \in (\field_q^*)^d$ such that \begin{align*}
\varphi_d\left(a,b\right)=\varphi_d\left(a,s_i\right)&=\alpha\\
\varphi_d\left(b,s_i\right)&=\beta
\end{align*} for some $\alpha,\beta \in C_d$ and for each $1\leq i \leq t$. Then $s_1,\ldots, s_{t},b$ are linearly independent.
\end{lemma}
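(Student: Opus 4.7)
The plan is proof by contradiction: assume $b = \sum_{j=1}^{t} c_j s_j$ for some scalars $c_j \in \field_q$, first deduce $\sum_{j} c_j = 1$ from the hypothesis on $\alpha$, and then use this together with the hypothesis on $\beta$ to produce a contradiction. The key observation I will exploit is that every color in $C_d \setminus \text{DOT}$ is recorded as a pair $(i, x_i + y_i)$, where $i$ is the first coordinate on which $x$ and $y$ differ; this piece of information is symmetric in $(x,y)$. Consequently, whenever $\alpha \notin \text{DOT}$, the hypothesis $\varphi_d(a,b) = \varphi_d(a, s_j) = \alpha$ pins down a common first-differing index $i'$ for $a$ versus each of $b, s_1, \ldots, s_t$, together with a common value $u = b_{i'} = s_{j,i'}$ for all $j$, with $u \neq a_{i'}$ and $u \neq 0$ (the latter because $b \in (\field_q^*)^d$).

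To establish $\sum_{j} c_j = 1$, I split on whether $\alpha$ has the dot property. If $\alpha \in \text{DOT}$, then $a \cdot b = a \cdot s_j$ takes the common nonzero value $\alpha$, and expanding $a \cdot b = \sum_{j} c_j (a \cdot s_j)$ yields $\alpha = \alpha \sum_{j} c_j$, so $\sum_{j} c_j = 1$. Otherwise, reading the identity $b = \sum_{j} c_j s_j$ coordinate-wise at position $i'$ gives $u = u \sum_{j} c_j$, and again $\sum_{j} c_j = 1$ since $u \neq 0$.

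To derive the contradiction from $\beta$, I again split on whether $\beta$ has the dot property. If $\beta \in \text{DOT}$, dotting $b = \sum_{j} c_j s_j$ with $b$ gives $b \cdot b = \beta \sum_{j} c_j = \beta$, contradicting the requirement in the DOT clause that $b \cdot s_j \neq b \cdot b$. If $\beta \notin \text{DOT}$, the same symmetric-coordinate observation applied to $\beta$ produces a common first-differing index $i''$ of $b$ versus each $s_j$ and a common value $u' = s_{j, i''} \neq b_{i''}$; reading $b = \sum_{j} c_j s_j$ at coordinate $i''$ and using $\sum_{j} c_j = 1$ gives $b_{i''} = u'$, a contradiction. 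The main thing to watch out for is the lex-ordering dependence of the UP, DOWN, and DOT clauses in the definition of $\varphi_d$; fortunately the coordinate-sum piece of each color is symmetric in $(x, y)$, and the DOT exclusion $x \cdot y \notin \{x \cdot x,\, y \cdot y\}$ rules out $b \cdot b$ irrespective of whether $b < s_j$ or $s_j < b$, so ordering does not cause any case proliferation. Once the assumed dependence of $b$ on $s_1, \ldots, s_t$ is refuted, the linear independence of $s_1, \ldots, s_t, b$ follows from the given independence of $s_1, \ldots, s_t$.
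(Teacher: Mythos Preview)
Your argument is correct and follows essentially the same route as the paper: assume $b=\sum_j c_j s_j$, use the color $\alpha$ (via the nonzero dot product when $\alpha\in\text{DOT}$, otherwise via the recorded first-differing coordinate) to force $\sum_j c_j=1$, and then use the color $\beta$ to derive a contradiction. The only cosmetic difference is the $\beta$ case split: the paper groups $\text{ZERO}$ with $\text{DOT}$ (both fix the dot product $b\cdot s_j$, so $b\cdot b=\sum_j c_j(b\cdot s_j)$ forces $\beta\in\text{UP}\cup\text{DOWN}$), whereas you group $\text{ZERO}$ with $\text{UP}\cup\text{DOWN}$ and use the coordinate data; since $\text{ZERO}$ colors carry both pieces of information, either split works.
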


\begin{proof}
Assume towards a contradiction that $b=\sum_{j=1}^{t} \lambda_j s_j$ for some scalars $\lambda_1,\ldots,\lambda_{t} \in \field_q$. We will first show that $\sum_{j=1}^{t}\lambda_j=1$. 

If $\alpha\in$ DOT, then $b=\sum_{j=1}^{t}\lambda_j s_j$ implies that \[\alpha=a \cdot b = \sum_{j=1}^{t}\lambda_j(a\cdot s_j)=\sum_{j=1}^{t}\lambda_j\alpha.\] Therefore, $\sum_{j=1}^{t}\lambda_j=1$ since $\alpha \notin$ ZERO.

If $\alpha\notin$ DOT, then \[a_i+b_i=a_i+s_{1,i}=\cdots=a_i+s_{t,i}\] where $i$ is the first index of difference between $a$ and $b$. Thus, $s_{j,i}=b_i$ for all $1\leq j\leq t$. But then $b=\sum_{j=1}^{t}\lambda_j s_j$ implies that \[b_i=\sum_{j=1}^{t}\lambda_j s_{j,i}=\sum_{j=1}^{t}\lambda_j b_i.\] Hence, $\sum_{j=1}^{t}\lambda_j=1$ since $b_i \neq 0$. Therefore, for any $\alpha \in C_d$ we have $\sum_{j=1}^{t} \lambda_j=1$.

Now, if $\beta$ has the dot property, then let $\beta'$ denote $b \cdot s_j$ for all $j=1,\ldots,t$. We have $$b \cdot b =\sum_{j=1}^{t}\lambda_j(b\cdot s_j)=\sum_{j=1}^{t}\lambda_j\beta'=\beta'.$$ But this implies that $\beta\in \text{UP} \cup \text{DOWN}$, contradicting that $\beta$ has the dot property.

So we must assume that $\beta$ does not have the dot property. It follows that \[b_k+s_{1,k}=\cdots=b_k+s_{t,k}\] where $k$ is the first index of difference between $b$ and $s_1$. Therefore, $s_{1,k}=\cdots=s_{t,k}$, and so \[b_k = \sum_{j=1}^t \lambda_j s_{j,k} = \sum_{j=1}^t \lambda_j s_{1,k} = s_{1,k},\] contradicting our choice of $k$.

Since we reach a contradiction for all colors $\beta$, it must be the case that $s_1,\ldots, s_{t},b$ are linearly independent vectors, as desired.
\end{proof}

We now define a particular instance of leftover structure that will be useful in our arguments.

\begin{definition}
We call the set of vectors $S=\{s_1,\ldots,s_t\} \subseteq (\field_q^*)^d$ a \emph{$t$-falling star} under the coloring $\varphi_d$ if $\varphi_d\left(s_i,s_j\right)=\alpha_i$ for all $1\leq j < i\leq t$. For any set of vectors $T \subseteq (\field_q^*)^d$ under $\varphi_d$, let $\FS(T)$ denote the maximum $t$ such that $T$ contains a $t$-falling star.
\end{definition}

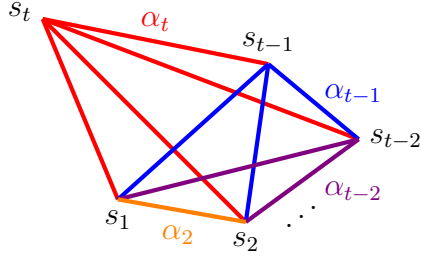
\begin{figure}
\centering
\begin{tikzpicture}
		\node[left] at (.8, 2.8) {$s_t$};
		\node[above] at (3.8, 2.1) {$s_{t-1}$};
		\node[right] at (5, 1.1) {$s_{t-2}$};
		\node[below] at (3.5, 0) {$s_2$};
		\node[below] at (1.8, .3) {$s_1$};
		\node[above, red] at (2.3, 2.4) {$\alpha_t$};
		\node[right, blue] at (4.4, 1.7) {$\alpha_{t-1}$};
		\node[right, violet] at (4.4, .4) {$\alpha_{t-2}$};
		\node[below, orange] at (2.6, .1) {$\alpha_2$};
		\node[right] at (3.85, .2) {$\iddots$};
		
		\draw[ultra thick, red] (.8, 2.7) -- (3.8, 2.1);
		\draw[ultra thick, red] (.8, 2.7) -- (5, 1.1);
		\draw[ultra thick, red] (.8, 2.7) -- (3.5, 0);
		\draw[ultra thick, red] (.8, 2.7) -- (1.8, .3);
		\draw[ultra thick, blue] (3.8, 2.1) -- (5, 1.1);
		\draw[ultra thick, blue] (3.8, 2.1) -- (3.5, 0);	
		\draw[ultra thick, blue] (3.8, 2.1) -- (1.8, .3);
		\draw[ultra thick, violet] (5, 1.1) -- (3.5, 0);
		\draw[ultra thick, violet] (5, 1.1) -- (1.8, .3);
		\draw[ultra thick, orange] (3.5,0) -- (1.8, .3);
	\end{tikzpicture}
\caption{A $t$-falling star.}
\end{figure}

The following result about these falling stars is an easy consequence of Lemma~\ref{LinIndyCondition} which can be shown by induction on the number of vectors.

\begin{corollary}\label{FallingStar}
Let $S=\{s_1,\ldots,s_t\} \subseteq (\field_q^*)^d$ be a $t$-falling star under $\varphi_d$. Then the vectors $s_1, \ldots, s_{t-1}$ are linearly independent. Consequently, for any subset $T \subseteq (\field_q^*)^d$, \[\rk(T) \geq \FS(T)-1.\] Moreover, if $T$ is contained within a monochromatic neighborhood of some other vector, then \[\rk(T) \geq \FS(T).\]
\end{corollary}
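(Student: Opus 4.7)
The plan is to establish the first assertion by induction on $t$ via Lemma~\ref{LinIndyCondition}, and then derive the two ``consequently'' statements as short corollaries. For the base cases $t = 1, 2$, the set $\{s_1, \ldots, s_{t-1}\}$ is either empty or a single nonzero vector in $(\field_q^*)^d$, so it is linearly independent. For $t \ge 3$, the truncation $\{s_1, \ldots, s_{t-1}\}$ is itself a $(t-1)$-falling star (the color conditions restrict trivially), so by the inductive hypothesis $s_1, \ldots, s_{t-2}$ are already linearly independent.

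To extend linear independence to $s_{t-1}$, I would apply Lemma~\ref{LinIndyCondition} to $\{s_1, \ldots, s_{t-2}\}$ with $a = s_t$ and $b = s_{t-1}$. The falling-star definition directly provides the needed hypotheses: $\varphi_d(s_t, s_{t-1}) = \alpha_t$ and $\varphi_d(s_t, s_i) = \alpha_t$ for every $1 \le i \le t-2$ supply the common color $\alpha = \alpha_t$ on the edges from $a$, while $\varphi_d(s_{t-1}, s_i) = \alpha_{t-1}$ for every $1 \le i \le t-2$ supplies the common color $\beta = \alpha_{t-1}$ on the edges from $b$. The lemma then yields that $s_1, \ldots, s_{t-2}, s_{t-1}$ are linearly independent, completing the induction.

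The bound $\rk(T) \ge \FS(T) - 1$ is then immediate, since any $\FS(T)$-falling star contained in $T$ supplies $\FS(T) - 1$ linearly independent vectors of $T$. For the final statement, suppose $T$ lies in a monochromatic $\gamma$-neighborhood of some vector $v$, so that $\varphi_d(v, s) = \gamma$ for every $s \in T$, and let $\{s_1, \ldots, s_t\} \subseteq T$ be a falling star of maximum length $t = \FS(T)$. Appending $v$ with new index $t+1$ and color $\alpha_{t+1} = \gamma$ produces a $(t+1)$-falling star (the new ``top'' vertex has the monochromatic color $\gamma$ to all smaller-indexed vertices, and the earlier color conditions persist). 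Applying the first assertion to this enlarged star, the first $t$ vectors $s_1, \ldots, s_t$ are linearly independent, giving $\rk(T) \ge t = \FS(T)$.

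The only real work is the inductive step, and even there the difficulty is just unpacking definitions: the falling-star condition is engineered precisely so that both the $a$-edges and the $b$-edges to the already-known linearly-independent set are monochromatic, which is exactly what Lemma~\ref{LinIndyCondition} demands. No case analysis over the color types DOT, ZERO, UP, DOWN is needed in this argument, since Lemma~\ref{LinIndyCondition} has absorbed all of that internally.
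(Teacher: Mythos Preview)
Your proof is correct and follows exactly the approach the paper indicates: induction on $t$ using Lemma~\ref{LinIndyCondition} with $a=s_t$, $b=s_{t-1}$, and the linearly independent set $\{s_1,\ldots,s_{t-2}\}$. The paper merely states that the corollary is an easy consequence of Lemma~\ref{LinIndyCondition} by induction, so your write-up is simply a fleshed-out version of the same argument, including the same appending-of-$v$ trick for the monochromatic-neighborhood statement.
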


\begin{definition}
Let $A,B \subseteq \field_q^d$ be disjoint sets of vectors. We say that \emph{$A$ confines $B$} if for each $a \in A$, $a \cdot x = a \cdot y$ for every $x,y \in B$.
\end{definition}

\begin{lemma}\label{AffineSubspaces}
Let  $A,B \subseteq \field_q^d$ be disjoint sets of vectors such that $A$ confines $B$. Then \[\af(B) \leq d-\rk(A).\]
\end{lemma}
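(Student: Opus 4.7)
The plan is to translate the confinement condition into an orthogonality statement and then apply the standard dimension formula for orthogonal complements. The key observation is that saying $a \cdot x = a \cdot y$ for all $x, y \in B$ is the same as saying that $a$ annihilates every difference vector $x - y$ with $x, y \in B$. Hence every $a \in A$ lies in the orthogonal complement of the subspace generated by such differences.

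More precisely, I would fix an arbitrary $b_0 \in B$ (assuming $B$ nonempty; if $B = \emptyset$ the inequality $\af(B) \leq d - \rk(A)$ holds vacuously with the convention $\af(\emptyset) = -1$ or $0$), and set $V = \operatorname{span}\{x - b_0 : x \in B\}$. By a standard fact about affine hulls, $\dim V = \af(B)$. The confinement hypothesis gives $a \cdot (x - b_0) = 0$ for every $a \in A$ and every $x \in B$, so $A \subseteq V^{\perp}$, which forces $\operatorname{span}(A) \subseteq V^{\perp}$. Taking dimensions yields
\[
\rk(A) = \dim \operatorname{span}(A) \leq \dim V^{\perp} = d - \dim V = d - \af(B),
\]
and rearranging produces the claimed bound.

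Since the argument is essentially a one-line application of $\dim V + \dim V^{\perp} = d$ once the orthogonality is observed, I do not foresee a real obstacle. The only minor subtlety is justifying the identity $\af(B) = \dim V$, which I would note follows from the definition of the affine hull as $b_0 + \operatorname{span}\{x - b_0 : x \in B\}$ (any choice of base point gives the same linear subspace, of dimension equal to the affine dimension). Everything else is bookkeeping.
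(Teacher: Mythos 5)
Your proof is correct and is essentially the paper's argument in dual form: the paper places $B$ inside the solution set of the full-rank linear system $a_i \cdot x = \alpha_i$ (an affine subspace of dimension $d-\rk(A)$), while you equivalently place the difference space $V=\operatorname{span}\{x-b_0 : x\in B\}$ inside $\operatorname{span}(A)^{\perp}$ and invoke $\dim V + \dim V^{\perp} = d$, which is valid over $\field_q$ since the standard bilinear form is nondegenerate. No gap.
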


\begin{proof}
Let $t=\rk(A)$, and let $a_1,\ldots, a_t$ be linearly independent vectors from $A$. Since $A$ confines $B$, then for each $a_i$, there exists an $\alpha_i \in \field_q$ such that $a_i \cdot b = \alpha_i$ for all $b \in B$. Therefore, $B$ is a subset of the solution space for the matrix equation, 
$$\left(\begin{array}{c} -  a_1  -  \\ \vdots \\ -  a_t  - \end{array}\right)
	\left(\begin{array}{c} \mid \\ x \\ \mid \end{array}\right) 
	= \left(\begin{array}{c} \alpha_1\\ \vdots\\ \alpha_t\end{array}\right).$$
Since $a_1,\ldots, a_t$ are linearly independent, the matrix of these $t$ vectors has full rank, and hence, the solution set is an affine space of dimension $d-t$, as desired.
\end{proof}

\begin{lemma}\label{Confines}
Let $A,B \subseteq (\field_q^*)^d$ be disjoint sets of vectors and $\alpha \in C_d$ such that $\varphi_d(a,b)=\alpha$ for all $a \in A$ and $b \in B$. Then either $A$ confines $B$ or $B$ confines $A$ (or both).
\end{lemma}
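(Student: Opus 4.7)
The plan is to split the argument according to which of the four color classes contains $\alpha$.

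First, if $\alpha \in \text{DOT} \cup \text{ZERO}$ then $\alpha$ has the dot property, so $a\cdot b$ equals the same field element (namely $\alpha$ itself if $\alpha \in \text{DOT}$, or $0$ if $\alpha \in \text{ZERO}$) for every pair $(a,b) \in A \times B$. In this case $A$ confines $B$ and $B$ confines $A$ simultaneously.

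Now suppose $\alpha \in \text{UP} \cup \text{DOWN}$, say $\alpha = (i,s)_{\text{UP}}$ or $\alpha = (i,s)_{\text{DOWN}}$. Here $i$ is the first coordinate at which each such pair differs, and $a_i + b_i = s$ for every $a \in A$, $b \in B$. I would first deduce the rigid coordinate structure forced by this: since $a_i + b_i = s$ holds for every choice of $a$ and $b$, fixing any $b^* \in B$ pins every $a \in A$ to $a_i = s - b^*_i$, so all vectors in $A$ share a common $i$-th coordinate $a^0$, and symmetrically all vectors in $B$ share $i$-th coordinate $s - a^0$. Moreover the requirement that $i$ is the \emph{first} differing index forces every $a \in A$ and $b \in B$ to agree on coordinates $1,\ldots,i-1$, and forces $a^0 \neq s - a^0$.

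Because the lex order on $(\field_q^*)^d$ is determined by the first differing coordinate, the sign of the comparison $a$ versus $b$ is determined entirely by whether $a^0 < s - a^0$ or $s - a^0 < a^0$ in the chosen order on $\field_q^*$. Thus either $a < b$ for every pair or $b < a$ for every pair. Applying the defining equality for the UP (resp.\ DOWN) color then gives one of the following uniform conclusions for every pair: $a \cdot b = a \cdot a$, $a \cdot b = b \cdot b$, $a \cdot b = b \cdot b$, or $a \cdot b = a \cdot a$, for the four subcases (UP with $a<b$), (UP with $b<a$), (DOWN with $a<b$), (DOWN with $b<a$) respectively. In each of the two cases where $a \cdot b$ depends only on $a$, we conclude $A$ confines $B$; in the other two, $B$ confines $A$.

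The only delicate point is the coordinate-rigidity step for the UP/DOWN case — verifying that a single equation $a_i + b_i = s$ holding across all of $A \times B$, combined with the first-differing-index condition, forces the shared prefix and the common $i$-th coordinate within each side. Once this rigidity is in place, the confining conclusion drops out immediately from the definition of UP and DOWN together with the fact that the lex comparison between any $a \in A$ and $b \in B$ is governed by the single field-element comparison between $a^0$ and $s - a^0$.
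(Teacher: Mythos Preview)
Your proof is correct and follows essentially the same route as the paper's: split on whether $\alpha$ has the dot property, and in the UP/DOWN case deduce a common prefix and fixed $i$th coordinate on each side so that the lex comparison is uniform, then read off the confining direction from the definition of UP and DOWN. The only cosmetic difference is that the paper uses a without-loss-of-generality reduction to the case $a<b$ while you list all four subcases explicitly.
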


\begin{proof}
If $\alpha$ has the dot property, then it is trivial that $A$ and $B$ confine one another. So assume that $\alpha \in \text{UP} \cup \text{DOWN}$. It follows that the first position of difference $i$ is the same between any $a \in A$ and any $b \in B$. Moreover, every vector of $A$ has the same $i^{th}$ component, every vector of $B$ has the same $i^{th}$ component, and every vector of $A \cup B$ has the same $j^{th}$ component for each $1 \leq j < i$ if $i>1$. Since the vectors are ordered lexicographically based on an underlying linear order of $\field_q^*$, it follows that either $a<b$ for all $a \in A$ and $b \in B$, or $b<a$ for all $a \in A$ and $b \in B$.

Without loss of generality, assume that $a<b$ for all $a \in A$ and $b \in B$. If $\alpha \in \text{UP}$, then for any particular $a \in A$, $a \cdot b = a \cdot a$ for every $b \in B$. Therefore, $A$ confines $B$. Similarly, if $\alpha \in \text{DOWN}$, then for any particular $b \in B$, $b \cdot a = b \cdot b$ for every $a \in A$,  so $B$ confines $A$.
\end{proof}

\begin{lemma}\label{NoAffineStars}
Let $t \geq 2$ be an integer. An affine subspace of $\field_q^d$ of dimension $t-2$ will contain no $t$-falling stars of $(\field_q^*)^d$ under $\varphi_d$. Therefore, \[\af(S) \geq \FS(S)-1\] for any subset of vectors $S \subseteq (\field_q^*)^d$.
\end{lemma}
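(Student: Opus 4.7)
The plan is to prove the affine-subspace statement directly; the inequality $\af(S) \geq \FS(S) - 1$ then follows immediately, since if $\FS(S) = t \geq 2$ then $S$ contains a $t$-falling star which, by the first statement, cannot lie in any affine subspace of dimension $t - 2$, forcing $\af(S) \geq t - 1$ (the case $\FS(S) \leq 1$ is trivial). For the first statement, I will assume for contradiction that some $t$-falling star $\{s_1, \ldots, s_t\}$ lies in an affine subspace $V$ of dimension $t - 2$ and derive a contradiction from the common color $\alpha_t := \varphi_d(s_t, s_j)$ shared by all edges with $j < t$.

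The first step is to represent $s_t$ as an affine combination of $s_1, \ldots, s_{t-1}$. By Corollary~\ref{FallingStar}, these $t - 1$ vectors are linearly independent, and linear independence implies affine independence (any nontrivial affine dependence would in particular be a nontrivial linear dependence). So they affinely span a subspace of dimension $t - 2$, which must coincide with $V$, and hence $s_t$ lies in their affine hull: $s_t = \sum_{j=1}^{t-1} \mu_j s_j$ with $\sum_{j=1}^{t-1} \mu_j = 1$.

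The contradiction then splits into two cases depending on the type of $\alpha_t$. If $\alpha_t \in \text{DOT}$, then $s_t \cdot s_j = \alpha_t$ for every $j < t$, and dotting the affine representation with $s_t$ yields $s_t \cdot s_t = \sum_j \mu_j (s_t \cdot s_j) = \alpha_t$; checking the definition of $\varphi_d$ shows that $s_t \cdot s_j = s_t \cdot s_t = \alpha_t \neq 0$ forces $\varphi_d(s_t, s_j)$ into $\text{UP} \cup \text{DOWN}$ regardless of the lex order between $s_t$ and $s_j$, contradicting $\alpha_t \in \text{DOT}$. Otherwise $\alpha_t = (i, c)_X$ for some $X \in \{\text{ZERO}, \text{UP}, \text{DOWN}\}$, so $i$ is the first position where $s_t$ differs from each $s_j$ and $s_{j, i} = c - s_{t, i}$, a common value for all $j < t$. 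Taking the $i$-th coordinate of the affine combination yields $s_{t, i} = c - s_{t, i}$, forcing $s_{t, i} = c/2$ (well-defined since $q$ is odd), whence $s_{j, i} = c/2 = s_{t, i}$, contradicting that $i$ is a coordinate where $s_t$ and $s_j$ differ.

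The main obstacle is the $\text{ZERO}$ subcase: the dot-product argument used for $\text{DOT}$ does not give a contradiction when $\alpha_t \in \text{ZERO}$, because the derived $s_t \cdot s_t = 0$ is perfectly consistent with a $\text{ZERO}$-labeled edge. The coordinate argument above resolves this by instead extracting first-difference information from the color label, uniformly handling $\text{ZERO}$, $\text{UP}$, and $\text{DOWN}$; it is here that the oddness of $q$, used to invert $2$, is essential.
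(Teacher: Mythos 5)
Your proof is correct, but it takes a genuinely different route from the paper. The paper argues by induction on $t$: it writes $s_t$ as an affine combination $\sum_j \lambda_j s_j$ with $\sum_j \lambda_j = 1$, handles the degenerate case $\lambda_1 = 0$ by passing to the $(t-1)$-falling substar inside a $(t-3)$-dimensional affine subspace, and then derives the contradiction from the color $\alpha_1$ at a vertex whose coefficient $\lambda_1$ is nonzero (splitting, as you do, into the DOT case via a dot-product computation and the ZERO/UP/DOWN case via the first coordinate of difference). You instead give a direct, non-inductive argument: invoking Corollary~\ref{FallingStar} to get that $s_1,\ldots,s_{t-1}$ are linearly, hence affinely, independent, so their affine hull is all of the $(t-2)$-dimensional subspace and $s_t$ is genuinely an affine combination of them; you then read the contradiction off the apex color $\alpha_t$, where no nonvanishing-coefficient hypothesis is needed because you pair the affine representation with $s_t$ itself (dotting with $s_t$, or evaluating the $i$-th coordinate). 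This buys two things: it removes the induction entirely, and it closes the small gap in the paper's assertion that $s_t$ can always be solved for (affine dependence of $t$ points only guarantees that \emph{some} point is an affine combination of the rest; your appeal to Corollary~\ref{FallingStar} makes the choice of $s_t$ legitimate, and this use of the corollary is not circular since it is proved independently of Lemma~\ref{NoAffineStars}). The only cosmetic difference is your inversion of $2$ in the non-DOT case, which is fine since $q$ is an odd prime power by standing assumption, though the paper's phrasing avoids needing it; and note your DOT-case contradiction only needs a single index $j$, exactly as in the paper.
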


\begin{proof}
We will proceed by induction on $t$. The base case $t=2$ is trivial since an affine subspace of dimension 0 is just one vector while a $2$-falling star contains two distinct vectors.

So assume that $t \geq 3$ and that the statement is true for $t-1$. Let $s_1,\ldots,s_t$ be $t$ distinct vectors that form a $t$-falling star. That is, let $\alpha_1,\ldots,\alpha_{t-1} \in C_d$ and let $\varphi_d\left(s_i,s_j\right) = \alpha_i$ when $1 \leq i < j \leq t$. Assume towards a contradiction that these vectors are contained inside an affine subspace of dimension $t-2$. Then there exist scalars $\lambda_1,\ldots,\lambda_{t-1} \in \field_q$ such that $s_t=\sum_{j=1}^{t-1} \lambda_j s_j$ and $\sum_{j=1}^{t-1} \lambda_j=1$.

First, note that if $\lambda_1=0$, then the vectors $s_2,\ldots,s_t$ form a $(t-1)$-falling star and are contained in an affine subspace of dimension $t-3$, a contradiction of the inductive hypothesis. So we must assume in what follows that $\lambda_1 \neq 0$.

Now, we consider two cases: either $\alpha_1 \in \text{DOT}$ or $\alpha_1 \not\in \text{DOT}$. If $\alpha_1 \in \text{DOT}$, then \[\alpha_1=s_1 \cdot s_t = s_1 \cdot \sum_{j=1}^{t-1} \lambda_j s_j = \lambda_1(s_1 \cdot s_1)+\alpha_1 \sum_{j=2}^{t-1} \lambda_j = \lambda_1(s_1 \cdot s_1)+\alpha_1(1-\lambda_1).\] Therefore, $\lambda_1(s_1 \cdot s_1 - \alpha_1)=0$. Since $\lambda_1 \neq 0$,  it follows that \[s_1 \cdot s_1 = \alpha_1 = s_1 \cdot s_2, \] which that implies  $\alpha_1 \not\in \text{DOT}$, a contradiction.

So assume that $\alpha_1 \not\in \text{DOT}$, and let $i$ denote the index of the first component where $s_1$ differs from the other vectors. In this case, \[s_{1,i}+s_{2,i}=\cdots=s_{1,i}+s_{t,i},\] and hence $s_{2,i}=\cdots =s_{t,i}$. Therefore, \[s_{t,i}=\sum_{j=1}^{t-1}\lambda_j s_{j,i} = \lambda_1 s_{1,i} + s_{t,i} \sum_{j=2}^{t-1} \lambda_j = \lambda_1 s_{1,i}+s_{t,i}(1-\lambda_1).\] So $\lambda_1(s_{1,i}-s_{t,i})=0$. Since $\lambda_1 \neq 0$,  we have $s_{1,i}=s_{t,i}$, a contradiction of our choice of $i$.
\end{proof}

\begin{lemma}\label{MaxStar}
Let $S \subseteq (\field_q^*)^d$ be a set of $p \geq 1$ vectors with a leftover structure under the coloring $\varphi_d$. Then \[\FS(S) \geq \left\lceil \log_2{p} \right\rceil + 1.\]
\end{lemma}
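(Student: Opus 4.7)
The plan is a straightforward induction on $p$, exploiting the recursion in the leftover structure together with the fact that all edges across the initial bipartition share a single color. For the base case $p = 1$, the set $S$ is a single vertex, which trivially constitutes a $1$-falling star, and $\lceil \log_2 1 \rceil + 1 = 1$.

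For the inductive step with $|S| = p \geq 2$, let $A, B$ be the initial bipartition witnessing the leftover structure of $S$, with common cross-color $\alpha$, and take $A$ to be the larger part so that $|A| \geq \lceil p/2 \rceil$. Since $A$ itself carries a leftover structure, the inductive hypothesis yields a $t$-falling star $\{s_1, \ldots, s_t\} \subseteq A$ with
\[ t \;\geq\; \lceil \log_2 |A| \rceil + 1 \;\geq\; \lceil \log_2 \lceil p/2 \rceil \rceil + 1 \;=\; \lceil \log_2 p \rceil. \]
I would then pick any $b \in B$ and set $s_{t+1} = b$. Because every $A$-$B$ edge has color $\alpha$, we have $\varphi_d(s_{t+1}, s_j) = \alpha$ for each $1 \leq j \leq t$, so setting $\alpha_{t+1} = \alpha$ extends the sequence to a $(t+1)$-falling star inside $S$, yielding $\FS(S) \geq \lceil \log_2 p \rceil + 1$.

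There is no genuine obstacle here. The definition of a falling star does not require the colors $\alpha_1, \ldots, \alpha_{t+1}$ to be distinct, so the extension by a single vertex of $B$ on top of the star in $A$ goes through immediately, with no need to verify that $\alpha$ avoids any existing $\alpha_i$. The only arithmetic point worth noting is the standard identity $\lceil \log_2 \lceil p/2 \rceil \rceil = \lceil \log_2 p \rceil - 1$ for $p \geq 2$, which is a one-line check. The underlying idea is simply that the leftover recursion admits, at every stage, a split into two nonempty halves separated by a monochromatic cross-cut; iterating down the larger half through the binary tree of splits therefore produces a nested chain of such cross-cuts of logarithmic depth, and each cut contributes one additional level to the falling star we build from the bottom up.
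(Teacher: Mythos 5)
Your proof is correct and follows essentially the same route as the paper: induction on $p$ via the initial bipartition, extending a maximum falling star in the larger part by one vertex of the other part (the paper phrases this as $\FS(S) \geq 1 + \max(\FS(A),\FS(B))$), and finishing with the identity $\lceil \log_2 \lceil p/2 \rceil \rceil = \lceil \log_2 p \rceil - 1$. Your explicit verification that the cross-color $\alpha$ lets the new vertex sit on top of the star, with no distinctness requirement on the $\alpha_i$, is exactly the step the paper leaves implicit.
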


\begin{proof}
We will prove this by induction on $p$. The base case when $p=1$ is trivial, so assume that $S$ has $p \geq 2$ vectors. Then $S$ has an initial bipartition, $S=A \cup B$, and we note that \[\FS(S) \geq 1+ \max \left(\FS(A),\FS(B)\right).\] Since $|A|,|B|<p$, then by induction $\FS(T) \geq \left\lceil \log_2(|T|) \right\rceil + 1$ for $T=A,B$. Thus, we have \[\FS(S) \geq  \left\lceil \log_2\left(\max(|A|,|B|)\right) \right\rceil + 2,\] and since $\max(|A|,|B|) \geq \left\lceil \frac{p}{2} \right\rceil$, then \[FS(S)\geq \left\lceil \log_2\left(\left\lceil \frac{p}{2} \right\rceil\right) \right\rceil + 2 = \left\lceil \log_2{p} \right\rceil + 1.\]
\end{proof}

\begin{lemma}\label{WildTime}
Let $p\geq2$ and $T\geq0$ be integers. Let $S \subseteq (\field_q^*)^d$ be a subset of $p$ vectors with a leftover structure under $\varphi_d$. If $T \geq 1$, let $a_1,\ldots,a_T \in (\field_q^*)^d$ and $\alpha_1,\ldots,\alpha_T \in C_d$ such that $\varphi_d\left(a_i,a_j\right)=\alpha_i$ for all $1\leq i<j\leq T$ and $\varphi_d\left(a_i,s\right)=\alpha_i$ for all $1\leq i \leq T$ and all $s \in S$.

Then there exists a sequence of positive integers, $x_1,\ldots,x_t$ such that $\sum_{i=1}^t x_i=p-1$ and for each $i=1,\ldots,t$, the following three conditions hold:
\begin{enumerate}
    \item $1 \leq x_i \leq \left\lfloor \frac{p-s_i}{2} \right\rfloor$;
    \item $\left\lceil \log_2(x_i) \right\rceil + \left\lceil \log_2(p-s_i-x_i) \right\rceil \leq d-1$;
    \item $\left\lceil \log_2(p-s_i-x_i) \right\rceil \leq d-i-T$,
\end{enumerate}
where $s_i=0$ if $i=1$ and $s_i=\sum_{j=1}^{i-1}x_j$ otherwise.
\end{lemma}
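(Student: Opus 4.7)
The plan is to prove Lemma~\ref{WildTime} by induction on $p$, at each stage peeling off the smaller side of the initial bipartition of the leftover structure on $S$.

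For the base case $p = 2$, write $S = \{s_1, s_2\}$ and take $t = 1$, $x_1 = 1$; conditions (1) and (2) are immediate from $d \ge 1$. For condition (3) I need $T \le d - 1$, which follows by noting that $a_1, \ldots, a_T, s_1, s_2$ is a $(T+2)$-falling star in $(\field_q^*)^d$, so Corollary~\ref{FallingStar} forces the first $T+1$ of these vectors to be linearly independent, giving $T + 1 \le d$.

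For the inductive step $p \ge 3$, let $S = A \cup B$ be the initial bipartition with connecting color $\beta$; after relabeling, assume $|A| \le |B|$ and set $x_1 = |A|$. Fix any $a^* \in A$; then $a_1, \ldots, a_T, a^*$ is a $(T+1)$-falling star whose common monochromatic $\beta$-neighborhood contains all of $B$, which carries a leftover structure on $|B| = p - x_1 \ge 2$ vertices. I apply the inductive hypothesis to $B$ with parameters $p' = |B|$ and $T' = T+1$, obtaining a sequence $y_1, \ldots, y_{t'}$, and then set $t = t' + 1$ and $x_{i+1} = y_i$ for $1 \le i \le t'$; since $\sum y_i = |B| - 1$, this yields $\sum x_i = p - 1$. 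The three conditions for $i \ge 2$ translate directly from the inductive output via the identity $s_i - x_1 = \sum_{j \le i-2} y_j$, so the only genuine work is at $i = 1$.

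Condition (1) at $i = 1$ is immediate from $1 \le |A| \le \lfloor p/2 \rfloor$. For condition (3), I choose an $\FS(B)$-falling star $b_1, \ldots, b_\ell$ in $B$ and prepend the external witnesses to form a $(T + 1 + \ell)$-falling star $a_1, \ldots, a_T, a^*, b_1, \ldots, b_\ell$; Corollary~\ref{FallingStar} then forces $T + \ell \le d$, and Lemma~\ref{MaxStar} gives $\ell \ge \lceil \log_2 |B| \rceil + 1$, so $\lceil \log_2 |B| \rceil \le d - 1 - T$. Condition (2) is the main obstacle: since all $A$-$B$ edges share color $\beta$, Lemma~\ref{Confines} guarantees that one of $A, B$ confines the other; assume WLOG $A$ confines $B$, so Lemma~\ref{AffineSubspaces} gives $\af(B) \le d - \rk(A)$. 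The critical extra $-1$ that distinguishes condition (2) from the naive bound of $d$ comes from observing that $A$ itself lies in the monochromatic $\beta$-neighborhood of any vector $b \in B$, which activates the strengthened clause of Corollary~\ref{FallingStar} to give $\rk(A) \ge \FS(A) \ge \lceil \log_2 x_1 \rceil + 1$ (via Lemma~\ref{MaxStar}); combined with $\af(B) \ge \FS(B) - 1 \ge \lceil \log_2(p - x_1) \rceil$ from Lemma~\ref{NoAffineStars} and Lemma~\ref{MaxStar}, this yields exactly $\lceil \log_2 x_1 \rceil + \lceil \log_2(p - x_1) \rceil \le d - 1$. The case where $B$ confines $A$ is handled symmetrically.
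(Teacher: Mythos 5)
Your proposal is correct and follows essentially the same route as the paper's proof: induction on $p$, the same base case via a $(T+2)$-falling star, and the same inductive step combining Lemma~\ref{Confines}, Lemma~\ref{AffineSubspaces}, Corollary~\ref{FallingStar}, Lemma~\ref{NoAffineStars}, and Lemma~\ref{MaxStar} to verify conditions (2) and (3) at $i=1$, then recursing on the larger part with $T+1$ apex vectors. The only differences are presentational (you fix $|A|\le|B|$ and treat both confinement directions explicitly, while the paper takes $x_1=\min\{|A|,|B|\}$ and notes the derived inequality is symmetric), so no further changes are needed.
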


\begin{proof}
We will prove this by induction on $p$. For the base case, let $p=2$. Let $x_1=1$ be the entire sequence. Then the first two conditions hold trivially since the sum of the sequence is 1, and since \[\left\lceil \log_2(1) \right\rceil + \left\lceil \log_2(1) \right\rceil=0 \leq d-1\] for any $d \geq 1$. For the third condition, since $\left\lceil \log_2(1) \right\rceil = 0$, it suffices to show that $T+1 \leq d$. This follows from Corollary~\ref{FallingStar}, since $S \cup \{a_1,\ldots,a_T\}$ forms a $(T+2)$-falling star, and hence $d \geq \rk\left(S \cup \{a_1,\ldots,a_T\}\right) \geq T+1$.

So assume that $S$ is a set of $p$ vertices for $p \geq 3$ and that the statement is true for smaller sets. Let the initial bipartition of $S$ be $S=A \cup B$. By Lemma~\ref{Confines}, we may assume without loss of generality that $A$ confines $B$. Therefore, $\af(B) \leq d-\rk(A)$ by Lemma~\ref{AffineSubspaces}. By Corollary~\ref{FallingStar}, we know that $\rk(A) \geq \FS(A)$ since $A$ is in a monochromatic neighborhood of any vector from $B$. And by Lemma~\ref{NoAffineStars}, we know that $\af(B) \geq \FS(B)-1$. Thus, $\FS(A)+\FS(B)-1 \leq d$. So by Lemma~\ref{MaxStar}, we can conclude that \[\left\lceil \log_2(|A|) \right\rceil + \left\lceil \log_2(|B|) \right\rceil \leq d-1.\] Therefore, setting $x_1=\min\{|A|,|B|\}$ guarantees that $1 \leq x_1 \leq \left\lfloor \frac{p}{2} \right\rfloor$ and that \[\left\lceil \log_2(x_1) \right\rceil + \left\lceil \log_2(p-x_1) \right\rceil \leq d-1.\] This gives us a positive integer $x_1$ which satisfies the first two conditions. Moreover, by Corollary~\ref{FallingStar} and Lemma~\ref{MaxStar},
\begin{align*}
    d \geq \rk\left(S \cup \{a_1,\ldots,a_T\}\right) &\geq \FS(S\cup \{a_1,\ldots, a_T\})-1\\
    & \geq (T+1+\max\left(\FS(A),\FS(B)\right))-1\\
    &\geq T + \left\lceil \log_2(p-x_1) \right\rceil +1.
\end{align*}
Thus, $x_1$ also satisfies the third condition.

Let $S'$ denote the larger of the two parts $A$ and $B$, and let $a_{T+1}$ denote an arbitrary vector from $S \setminus S'$. Then $S'$ contains $p-x_1<p$ vectors and has a leftover structure under $\varphi_d$. Moreover, $S'$ and $a_1,\ldots,a_{T},a_{T+1}$ satisfy the monochromatic neighborhood conditions of the hypothesis. Hence, by induction there exists a sequence of positive integers $x_1',\ldots,x_{t'}'$ such that $\sum_{i=1}^{t'}x_i'=p-x_1-1$ and for each $i=1,\ldots,t'$, the following three conditions hold:
\begin{enumerate}
    \item $1 \leq x'_i \leq \left\lfloor \frac{p-x_1-s'_i}{2} \right\rfloor$;
    \item $\left\lceil \log_2(x'_i) \right\rceil + \left\lceil \log_2(p-x_1-s'_i-x'_i) \right\rceil \leq d-1$;
    \item $\left\lceil \log_2(p-x_1-s'_i-x'_i) \right\rceil \leq d-i-(T+1)$,
\end{enumerate}
where $s'_i=0$ if $i=1$ and $s'_i=\sum_{j=1}^{i-1}x'_j$ otherwise.

Let $x_i=x'_{i-1}$ for $2\leq i \leq t'+1$ and let $t=t'+1$ to get a sequence $x_1,\ldots,x_t$ for which \[\sum_{i=1}^t x_i=x_1+\sum_{i=1}^{t'}x_i'=x_1+p-x_1-1=p-1.\] For each $i=2,\ldots,t$, the first two conditions are satisfied since $x_1+s'_i=s_{i+1}$, and the third condition is satisfied since $d-i-(T+1)=d-(i+1)-T$.
\end{proof}

\begin{corollary}\label{6clique}
Let $S \subseteq (\field_q^*)^3$ be a set of 6 vectors. Then $S$ cannot have a leftover structure under the coloring $\varphi_3$.
\end{corollary}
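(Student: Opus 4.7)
The plan is to argue by contradiction via a direct application of Lemma~\ref{WildTime} with parameters $p = 6$, $d = 3$, and $T = 0$ (no auxiliary vectors). Assuming $S \subseteq (\field_q^*)^3$ is a set of $6$ vectors with a leftover structure under $\varphi_3$, the lemma would produce a sequence of positive integers $x_1, \ldots, x_t$ with $\sum_i x_i = 5$ satisfying the three listed conditions. The idea is to show that already the very first term $x_1$ cannot exist.

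Concretely, for $i = 1$ we have $s_1 = 0$, so condition (1) forces $x_1 \in \{1, 2, 3\}$ and condition (2) demands
\[
\left\lceil \log_2(x_1) \right\rceil + \left\lceil \log_2(6 - x_1) \right\rceil \leq d - 1 = 2.
\]
I would then run through the three possibilities: $x_1 = 1$ gives $0 + \lceil \log_2 5 \rceil = 0 + 3 = 3$; $x_1 = 2$ gives $1 + \lceil \log_2 4 \rceil = 1 + 2 = 3$; and $x_1 = 3$ gives $\lceil \log_2 3 \rceil + \lceil \log_2 3 \rceil = 2 + 2 = 4$. In every case the sum exceeds $2$, so condition (2) is violated and no valid $x_1$ exists, contradicting the conclusion of Lemma~\ref{WildTime}.

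There is no real obstacle here; the corollary is designed to be a direct numerical consequence of Lemma~\ref{WildTime}. The conceptual content has already been absorbed into that lemma, whose proof encodes the interaction between $\FS$, $\rk$, $\af$, and the doubling behavior of leftover structures. What this corollary really records is that a leftover $6$-clique would require the initial bipartition $S = A \cup B$ to accommodate enough independence in the confining part together with enough affine dimension in the confined part, and in dimension $d = 3$ the pigeonhole inequality $\lceil \log_2 |A| \rceil + \lceil \log_2 |B| \rceil \leq 2$ already fails for $|A| + |B| = 6$. The proof is therefore one paragraph: invoke Lemma~\ref{WildTime}, check the three cases for $x_1$, and conclude.
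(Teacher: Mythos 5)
Your proposal is correct and follows exactly the paper's argument: invoke Lemma~\ref{WildTime} with $T=0$ and verify that no $x_1 \in \{1,2,3\}$ satisfies $\lceil \log_2(x_1) \rceil + \lceil \log_2(6-x_1) \rceil \leq 2$; your case check (sums of $3$, $3$, and $4$) is the "simple to check" step the paper leaves implicit.
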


\begin{proof}
If such a set exists, then by Lemma~\ref{WildTime} with $T=0$, a positive integer $x_1$ exists such that $1 \leq x_1 \leq 3$ and \[\left\lceil \log_2(x_1) \right\rceil + \left\lceil \log_2(6-x_1) \right\rceil \leq 2.\] It is simple to check that no such integer exists.
\end{proof}

\begin{corollary}\label{8clique}
Let $S \subseteq (\field_q^*)^4$ be a set of 8 vectors. Then $S$ cannot have a leftover structure under the coloring $\varphi_4$.
\end{corollary}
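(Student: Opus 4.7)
The plan is to mirror the argument used for Corollary~\ref{6clique}: assume towards a contradiction that $S$ has a leftover structure under $\varphi_4$, apply Lemma~\ref{WildTime} with $p=8$, $T=0$, and $d=4$, and then check that no sequence of positive integers $x_1,\ldots,x_t$ summing to $7$ can simultaneously satisfy the three required conditions. The contradiction will establish the corollary.

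First I would analyze the possible values of $x_1$. With $s_1 = 0$, condition (1) forces $1 \leq x_1 \leq 4$. Condition (2) requires $\lceil \log_2(x_1) \rceil + \lceil \log_2(8-x_1) \rceil \leq 3$. Checking each candidate directly: for $x_1 = 1$ the sum is $0 + 3 = 3$ (OK); for $x_1 = 2$ it is $1 + 3 = 4$; for $x_1 = 3$ it is $2 + 3 = 5$; and for $x_1 = 4$ it is $2 + 2 = 4$. Thus the only admissible value is $x_1 = 1$, and one verifies that condition (3), $\lceil \log_2 7 \rceil = 3 \leq 4 - 1 = 3$, is also met.

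Next I would examine $x_2$, given $s_2 = 1$. Condition (3) now reads $\lceil \log_2(7 - x_2) \rceil \leq 4 - 2 = 2$, which forces $7 - x_2 \leq 4$, i.e., $x_2 \geq 3$. Combined with condition (1), which caps $x_2$ at $\lfloor 7/2 \rfloor = 3$, we are forced to take $x_2 = 3$. But then condition (2) demands $\lceil \log_2 3 \rceil + \lceil \log_2 4 \rceil \leq 3$, whereas the left-hand side equals $2 + 2 = 4$. This is a contradiction, so the sequence guaranteed by Lemma~\ref{WildTime} cannot exist.

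The argument is essentially a finite case check, and the main (minor) obstacle is simply being careful in enumerating the small set of candidate values and verifying that conditions (1)--(3) are incompatible after the forced choice $x_1 = 1$; there is no additional structural ingredient beyond Lemma~\ref{WildTime}.
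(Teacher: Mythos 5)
Your proposal is correct and follows the paper's proof essentially verbatim: apply Lemma~\ref{WildTime} with $T=0$, verify that the constraints force $x_1=1$, and then show that no admissible $x_2$ exists since conditions (1) and (3) force $x_2=3$ while condition (2) rules it out. The arithmetic in your case check is accurate, so nothing further is needed.
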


\begin{proof}
If such a set exists, then by Lemma~\ref{WildTime} with $T=0$, we must be able to find a sequence of positive integers $x_1,x_2,\ldots,x_t$ that satisfy the conditions given in the Lemma. In particular, $1 \leq x_1 \leq 4$ and \[\left\lceil \log_2(x_1) \right\rceil + \left\lceil \log_2(8-x_1) \right\rceil \leq 3.\] We can check and find that $x_1=1$ is the only possibility. Therefore, $1 \leq x_2 \leq 3$ such that
\begin{align*}
\left\lceil \log_2(7-x_2) \right\rceil &\leq 2\\
\left\lceil \log_2(x_2) \right\rceil + \left\lceil \log_2(7-x_2) \right\rceil &\leq 3.\\
\end{align*}
A quick check reveals that no such integer exists.
\end{proof}

Theorem~\ref{NewBounds} follows from Theorem~\ref{MainTheorem} and Corollaries~\ref{6clique} and~\ref{8clique}.

\section{Conclusion}

The proof of Lemma~\ref{WildTime} actually shows which leftover $p$-cliques can appear under $\varphi_d$ for a particular $d$. For example, this proof implies that the only leftover $5$-clique that can appear under $\varphi_3$ is a monochromatic $C_4$ contained inside a monochromatic neighborhood of one vertex (that is, an initial $(1,4)$-bipartition with a $(2,2)$-bipartition inside the part with four vertices). In \cite{55}, we handled this specific leftover structure by splitting each color class of $\varphi_3$ into four new colors determined by certain relations between vectors. While the current paper can be viewed as our attempt to fully generalize the coloring techniques used in \cite{55} and \cite{mubayi2004}, it does not generalize the splitting that was crucial for handling the final leftover $5$-clique. Perhaps such a generalized splitting would be enough to give $f(n,p,p) \leq n^{1/(p-2) +o(1)}$ for $p \geq 6$ or at least improve the best-known upper bounds for values of $p$ other than the two  addressed in this paper.

\begin{remark} 
Since the completion of this work, Conlon, Pohoata, and Tyomkyn have emailed us that they have obtained a version of Theorem~\ref{MainTheorem} independently.
\end{remark}

\bibliography{88paper}
\bibliographystyle{plain}

\end{document}